\documentclass[a4paper]{amsart}

\usepackage{amsthm}
\usepackage{amsmath}
\usepackage{amssymb}
\usepackage{latexsym}
\usepackage{enumerate}
\usepackage{graphicx}
\usepackage[utf8]{inputenc}

\usepackage{epsfig} 
\usepackage{pgf}
\usepackage{tikz}
\usepackage{float}
\usepackage{dsfont}
\usepackage{times}

\renewcommand{\thetheoremName}

\newtheorem{proposition[[]]}[theoremName]{Proposition G}

\newtheorem{theorem}{Theorem}[section]
\newtheorem{lemma}[theorem]{Lemma}

\newtheorem{proposition}[theorem]{Proposition}
\newtheorem{corollary}[theorem]{Corollary}

\theoremstyle{definition}
\newtheorem{definition}[theorem]{Definition}

\numberwithin{equation}{section}


\newcommand{\Hess}{\operatorname{Hess}}

\newcommand{\kan}{\mathbb{M}^{n}(\kappa)}

\newcommand{\erre}{\mathbb{R}}
\newcommand{\R}{\mathbb{R}}

\newcommand{\grad}{\operatorname{\nabla}}
\newcommand{\hess}{\textrm{Hess}}

\newcommand{\E}{\mathcal{E}}
\providecommand{\rad}{\mathop{\rm rad}\nolimits}
\newcommand{\Rho}{{\rho_{\erre^n}}}

\begin{document}

  \title[Total curvature and area growth of  tamed surfaces]{On the total curvature and extrinsic area growth of surfaces with tamed second fundamental form}

\author{Cristiane M. Brand\~ao}      
\address{Departamento de Matem\'atica universidade do Cear\'a-UFC, 60455-760 Fortaleza, CE, Brazil                       }
\email{crismbrandao@yahoo.com.br}


\author{Vicent Gimeno}      
\address{Department of Mathematics-INIT-IMAC, Universitat Jaume I, Castell\'o de la Plana, Spain                        
}
\email{gimenov@uji.es}

\thanks{Work partially supported by DGI grant MTM2010-21206-C02-02.}


\begin{abstract}
In this paper we show that a complete and non-compact surface immersed in the Euclidean space with quadratic extrinsic area growth has finite total curvature provided the surface has tamed second fundamental form and admits total curvature. In such a case we obtain  as well a generalized Chern-Osserman inequality. In the particular case of a surface of nonnegative curvature, we prove that the surface is diffeomorphic to the Euclidean plane if the surface has tamed second fundamental form, and that the surface is isometric to the Euclidean plane if the surface has strongly tamed second fundamental form.  In the last part of the paper  we characterize the fundamental tone of any submanifold of tamed second fundamental form immersed in an ambient space with a pole and quadratic decay of the radial sectional curvatures.  

\keywords{Total curvature \and Chern-Osserman inequality \and Fundamental tone \and area growth}
 \subjclass{35P15}
\end{abstract}

\maketitle

\section{Introduction}
Let $M$ be a complete non-compact surface, the total curvature of $M$ is the improper integral $\int_M K\, dA$ of the Gaussian curvature $K$ with respect to the volume element $dA$ of $M$. It is said that $M$ admits total curvature if for any compact exhaustion $\{\Omega_i\}$ of $M$, the limit
$$
\int_M K\,dA=\lim_{i\to\infty}\int_{\Omega_i}K\,dA 
$$
exists. Cohn-Vossen proved in \cite{CV} that $\int_M K\, dA\leq \chi(M)$, where $\chi(M)$ is the Euler characteristic of $M$. A well known theorem due to Huber \cite{Hub} states that if the negative part of the curvature $K_{-}=\max\{-K,0\}$ has finite integral, namely,
\begin{equation}\label{finite-negative}
\int_MK_{-}\,dA<\infty,
\end{equation}then, $\int_M K\, dA\leq \chi(M)$ and $M$ is conformally equivalent to a compact Riemann surface with finitely many punctures. Hartman, under the assumption (\ref{finite-negative}) proved in \cite{Har} that the area ${\rm A}(B_r)$ of a geodesic ball of radius $r$ at a fixed point must grow at most quadratically in $r$. Reciprocally, Li proved in \cite{Li97} that if $M$ has at most quadratic area growth, finite topology and the Gaussian curvature of $M$ is either non-positive or non-negative, near infinity of each end, then $M$ must have finite total curvature.

From an extrinsic point of view, in the setting of a minimal surface $M$ immersed in the Euclidean space $\erre^n$, it is  well known, see \cite{Choss, JM,Oss1,Oss}, that  if $M$ has finite total curvature then $M$ has finite topological type and  \emph{quadratic extrinsic area growth}, \emph{i.e.}, there exists a constant $C$ such that for any $r\in \R_+$
\begin{equation}
\text{Area}(M\cap B_r(0))\leq C r^2,
\end{equation}
where $B_r(0)$ denotes the geodesic ball centered at the origin $0\in \erre^n$ of radius $r$.

 Conversely, Q. Chen \cite{Ch1}, proved that if $M$ is an oriented complete  minimal surface in the Euclidean space $\erre^n$ with quadratic extrinsic area growth and finite topological type then $M$ has finite total curvature.

A natural question  is whether  an equivalent result relating the extrinsic area growth and the total curvature holds for a boarder class of complete surfaces in the Euclidean space. The aim of this paper is to provide an answer to this question under certain control of the second fundamental form of the immersion. 
 A surface $M$ is said to have tamed second fundamental form if for a (any) compact exhaustion $\{\Omega_i\}$ of $M$,
\begin{equation}\label{eq1.3}
a(M):=\lim_{i\to\infty}\left(\sup_{x\in M\setminus \Omega_i}\left\{\rho_M(x)\Vert\alpha(x)\Vert\right\}\right)<1,
\end{equation}
where $\rho_{M} (x) = {\rm dist}_{M}(x_0, x)$ is the  distance function on $M$ to a fixed point $x_0$, and $\Vert \alpha(x)\Vert $ is the  norm of the second fundamental form at $\varphi(x)$.  The notion of immersion with tamed second fundamental form was introduced in \cite{Pac} for submanifolds of $\mathbb{R}^{n}$ and in \cite{Pac2} for submanifolds of Hadamard manifolds. This notion can  be naturally extended to manifolds with a pole and radial sectional curvature bounded above, see \cite{GPGap}. Under the hypothesis of tamed second fundamental form and quadratic extrinsic area growth we can state the following result.

\begin{theorem}\label{perimeter-th}Let $M$ be an immersed complete oriented  surface of $\mathbb{R}^{n}$ with curvature function $K$ and tamed second fundamental form. Suppose that $M$  admits total curvature. Then, $M$ has finite total curvature ($\int_MKdA>-\infty$), if and only if,
$M$ has quadratic extrinsic area growth, \emph{i.e.}, there exists a constant $C_1$ such that,
\begin{equation}\label{upper-theo}
\text{A}(M\cap B_r(0))\leq C_1 r^2,
\end{equation}
for any $r$ large enough. Furthermore, if (\ref{upper-theo}) holds, then there exists a constant $C_0>0$ such that
\begin{equation}
\text{A}(M\cap B_r(0))\geq C_0 r^2,
\end{equation}
for any $r$ large enough.
\end{theorem}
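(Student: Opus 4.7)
The plan is to analyze the extrinsic sublevel sets $D_r := M \cap B_r(0)$ through a combination of Gauss-Bonnet on $D_r$ and Hessian comparison for the Euclidean radial function. The prior results on immersions with tamed second fundamental form (see \cite{Pac,GPGap}) guarantee that $M$ is properly immersed and has finite topological type, so $\{D_r\}$ is an exhaustion of $M$, and for every $r$ sufficiently large $\chi(D_r)=\chi(M)$ while $\partial D_r$ is a smooth finite union of circles. Assuming (after translation) that $\varphi(x_0)=0$, one has $r(x)\leq\rho_M(x)$ everywhere, so the tamed condition \eqref{eq1.3} yields $\Vert\alpha(x)\Vert\leq a/r(x)$ outside a compact set for any constant $a$ with $a(M)<a<1$; this is the quantitative input throughout.

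The key calculation is the geodesic curvature of $\partial D_r$ in $M$. From the Gauss formula,
\begin{equation*}
\Hess^M r(X,Y)=\Hess^{\R^n}r(X,Y)+\langle\alpha(X,Y),\nabla^{\R^n}r\rangle,
\end{equation*}
together with $\Hess^{\R^n}r(X,X)=(|X|^2-\langle X,\nabla r\rangle^2)/r$. For a unit tangent $T$ to $\partial D_r$ in $M$ (which is automatically orthogonal to $\nabla^{\R^n}r$), one obtains $k_g=(1/r+\langle\alpha(T,T),\nabla r\rangle)/|\nabla^M r|$. The coarea formula $\A'(r)=\int_{\partial D_r}|\nabla^M r|^{-1}\,ds$ together with the tamed estimate then gives $\int_{\partial D_r}k_g\,ds=\A'(r)/r+\epsilon(r)$ with $|\epsilon(r)|\leq (a/r)\A'(r)$. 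Substituting in Gauss-Bonnet $\int_{D_r}K\,dA+\int_{\partial D_r}k_g\,ds=2\pi\chi(M)$ produces the master inequality
\begin{equation*}
(1-a)\frac{\A'(r)}{r}\ \leq\ 2\pi\chi(M)-\int_{D_r}K\,dA\ \leq\ (1+a)\frac{\A'(r)}{r}.\qquad(\ast)
\end{equation*}

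Both implications now follow from $(\ast)$. If $\int_M K\,dA>-\infty$, the central quantity in $(\ast)$ stays bounded as $r\to\infty$, so $\A'(r)\leq Cr$ and integration gives $\A(r)\leq C_1 r^2$. Conversely, if $\A(r)\leq Cr^2$, a pointwise bound on $\A'$ is not available, but an averaging argument over dyadic intervals $[r_n/2,r_n]$ produces a sequence $s_n\to\infty$ along which $\A'(s_n)\leq C's_n$; by $(\ast)$ the partial integrals $\int_{D_{s_n}}K\,dA$ remain bounded, and because the total curvature is assumed to exist as a limit it must be finite.

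For the quadratic lower bound the implication just proved supplies finite total curvature, so the central quantity in $(\ast)$ converges to $\beta:=2\pi\chi(M)-\int_M K\,dA$, which is nonnegative by Cohn-Vossen, and whenever $\beta>0$ we have $\A'(r)/r\geq\beta/(1+a)+o(1)$ and therefore $\A(r)\geq C_0 r^2$ by integration. Ruling out the borderline case $\beta=0$ is the technical heart of the lower bound: I would combine $(\ast)$ with the monotonicity derived from $\Delta^M r^2=4+2r\langle\vec H,\nabla r\rangle$ (with $|r\langle\vec H,\nabla r\rangle|\leq 2a$ outside a compact set) and with the finite end structure afforded by tamed SFF, showing that each end of $M$ contributes a strictly positive asymptotic multiplicity to $\beta$. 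This end-by-end positivity is the main obstacle of the proof and ties naturally into the generalized Chern-Osserman inequality mentioned in the abstract; the rest of the argument is a clean consequence of the identity $(\ast)$.
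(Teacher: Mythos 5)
Your proposal follows the same core strategy as the paper—Gauss–Bonnet applied to extrinsic sublevel sets, combined with a geodesic-curvature estimate for extrinsic circles in which the tamed hypothesis bounds the $\alpha$-contribution—but packages it differently. The paper splits the argument into two pieces: Theorem 3.1 applies Gauss–Bonnet to extrinsic \emph{annuli} $A_{t_1,t_2}$ (whose Euler characteristic vanishes), obtaining a pair of two-sided inequalities between $\mathrm{L}(\partial D_{t_1})/t_1$, $\mathrm{L}(\partial D_{t_2})/t_2$, and $\int_{A_{t_1,t_2}}K\,dA$; then Proposition 3.2 translates between linear perimeter growth and quadratic area growth using the coarea formula and the divergence theorem for $\Delta_M R^2$. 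You instead apply Gauss–Bonnet directly to $D_r$ (using $\chi(D_r)=\chi(M)$ for large $r$), work with $A'(r)=\int_{\partial D_r}|\nabla^M R|^{-1}\,ds$ rather than $\mathrm{L}(\partial D_r)$, and replace the divergence-theorem step by a dyadic averaging argument to extract a good sequence $s_n$ when only an integrated bound on $A$ is known. Your ``master inequality'' $(\ast)$ is equivalent in content to the paper's (\ref{eq3.8})--(\ref{eq3.10}), your bound $|\epsilon(r)|\leq (a/r)A'(r)$ is correct (using $|\langle\alpha(T,T),\nabla r\rangle|\leq\|\alpha\|\,|\nabla^\perp r|\leq\|\alpha\|$, hence needing only the crude bound $|\nabla^\perp r|\leq 1$ and not Kasue's lemma, which the paper invokes for sharper constants elsewhere), and both implications of the equivalence follow cleanly. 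The streamlined route buys you economy: you avoid the detour through linear perimeter growth as a separate notion and the auxiliary Proposition.

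On the lower bound you correctly isolate the delicate point: $(\ast)$ gives $A'(r)/r\to \beta/(1+a)$ up to constants, where $\beta=2\pi\chi(M)-\int_M K\,dA\geq 0$ by Cohn–Vossen, and the quadratic lower bound needs $\beta>0$. You flag that ruling out $\beta=0$ is unresolved in your sketch. It is worth noting that the paper's proof has the same subtlety: after choosing $\epsilon$ and then $t_1$ so that $\bigl|\int_{A_{t_1,t_2}}K\,dA\bigr|<\epsilon$, the constant $\widetilde C_0$ is defined as $\frac{\sqrt{1-\Lambda_c^2(t_1)}}{1+c\Lambda_c(t_1)}\left[\frac{\mathrm{L}(\partial D_{t_1})}{t_1}(1-c\Lambda_c(t_1))-\epsilon\right]$, whose positivity requires $\frac{\mathrm{L}(\partial D_{t_1})}{t_1}(1-c\Lambda_c(t_1))>\epsilon$, and this is asserted rather than proved (the $t_1$ depends on $\epsilon$, so the required inequality is not automatic). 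So your acknowledgement of the gap is well placed; completing it would require an argument (cone-like asymptotics of tamed ends via the gradient-flow of $R$, or a monotonicity-type density bound compatible with $|\vec H|=O(1/R)$) that neither your sketch nor the paper's proof makes fully explicit.
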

Observe that the assumption that the surface admits total curvature (finite or infinite) can be achieved if the surface has semidefinite curvature (either nonpositive or nonnegative). As observed by Jorge-Meeks \cite{JM}, any complete $m$-dimensional submanifold $M$ of $ \mathbb{R}^{n}$ homeomorphic to a compact Riemannian manifold
$\overline{M}$ punctured at finite number of points
$\{p_{1},\ldots,p_{r}\}$ and having a well defined normal vector at
infinity have $a(M)=0$. In particular, any  complete minimal surfaces of
$\mathbb{R}^{n}$ with finite total curvature has tamed second fundamental form with $a(M)=0$. Anderson \cite{A1},  showed that a complete $m$-dimensional minimally
immersed submanifold $M$ of $\mathbb{R}^{n}$ has finite total scalar
curvature, $\smallint_{M}\Vert \alpha \Vert^{m} dV<\infty $, if and
only if $M$ is $C^{\infty}$- diffeomorphic to a compact 
Riemannian manifold $\overline{M}$ punctured at a finite number of
points $ \{p_{1},\ldots,p_{r}\}$
 and the Gauss map $\Phi$ on $M$
extends to a $C^{\infty}$- map $\overline{\Phi}$ on $\overline{M}$,
where $\Vert \alpha \Vert$ is the norm of the second fundamental
form of $M$.
In \cite{Pac}, Bessa, Jorge and Montenegro  showed that some aspects of Anderson's result hold on  complete immersed submanifolds  of $\mathbb{R}^{n}$ with tamed second fundamental form, i.e. they are properly immersed and  have finite topology,  meaning that $M$ is $C^{\infty}$-diffeomorphic to a  compact smooth manifold $\overline{M}$ with boundary. This result was extend by  Bessa-Costa to isometric immersions with tamed second fundamental form into Hadamard manifolds, \cite{Pac2} and by Gimeno-Palmer in \cite{GPGap} to  isometric immersion with tamed second fundamental form into  ambient manifolds with a pole and bounded radial sectional curvatures. They  also have shown that the volume growth and the number of ends of  submanifolds of dimension greater than $2$ are controlled with an appropriate decay of the extrinsic curvature.  

Assuming finite total curvature we can treat the two dimensional case obtaining the following Chern-Osserman type inequality. 
\begin{theorem}\label{theo2}Let $M$ be an oriented surface immersed in $\erre^n$ with curvature function $K$ and  tamed second fundamental form. Suppose in addition that $M$ has finite total curvature. 
Then, 
\begin{equation}
\left(1-a(M)^2\right)\widetilde C_0\leq 2\pi\chi(M)-\int_MKd\text{A}\leq C_1,
\end{equation}
where $\chi(M)$  is the Euler characteristic of $M$ and $\widetilde C_0,C_1$ are positive constants such that
\begin{equation}
\begin{aligned}
\text{A}(M\cap B_r(0))\leq & C_1 r^2,\\
\text{L}(M\cap S_r(0))\geq & \widetilde C_0 r,
\end{aligned}
\end{equation}
for any $r$ large enough.
\end{theorem}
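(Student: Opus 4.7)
The natural strategy is to apply Gauss--Bonnet to the extrinsic disks $\Omega_r := M \cap B_r(0)$. Tameness forces finite topology by Bessa--Jorge--Montenegro (cited above), so $\chi(\Omega_r) = \chi(M)$ for large $r$, and together with the finite-total-curvature hypothesis
\begin{equation*}
2\pi\chi(M) - \int_M K\, dA \;=\; \lim_{r\to\infty} \int_{M \cap S_r(0)} k_g\, ds.
\end{equation*}
The task reduces to sandwiching this limit between $(1-a(M)^2)\widetilde C_0$ and $C_1$.

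The key computation is an explicit formula for the geodesic curvature of an arc-length parametrized component $\gamma$ of $M \cap S_r$. Differentiating $|\gamma|^2 = r^2$ twice yields $\langle \gamma, \gamma''_{\mathbb{R}^n}\rangle = -1$; decomposing $\gamma''_{\mathbb{R}^n} = \nabla^M_{\gamma'}\gamma' + \alpha(\gamma', \gamma')$ and $\gamma = \gamma^T + \gamma^\perp$ with $\gamma^T = r\nabla^M\rho$ produces, for the standard orientation,
\begin{equation*}
k_g \;=\; \frac{1 + \langle \alpha(\gamma',\gamma'), \gamma^\perp\rangle}{r\,|\nabla^M\rho|}.
\end{equation*}
Writing $u := |\nabla^M\rho| \in (0,1]$, one has $|\gamma^\perp| = r\sqrt{1-u^2}$, and since the extrinsic distance satisfies $\bar\rho \le \rho_M$, the tameness assumption \eqref{eq1.3} gives $r\|\alpha\| \le \rho_M\|\alpha\| \le a(M)+\epsilon$ outside a compact set. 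This yields the two-sided pointwise estimate
\begin{equation*}
\frac{1 - (a(M)+\epsilon)\sqrt{1-u^2}}{r\,u} \;\le\; k_g \;\le\; \frac{1 + (a(M)+\epsilon)\sqrt{1-u^2}}{r\,u}.
\end{equation*}

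For the lower bound, the one-variable minimization $\min_{v\in[0,1)}(1-av)/\sqrt{1-v^2} = \sqrt{1-a^2}$ (attained at $v=a$) gives the uniform pointwise inequality $k_g \ge \sqrt{1-(a(M)+\epsilon)^2}/r$; integrating along $M\cap S_r$, invoking $L(M\cap S_r)\ge\widetilde C_0 r$, and letting $r\to\infty$, $\epsilon\to 0$ produces
\begin{equation*}
2\pi\chi(M)-\int_M K\,dA \;\ge\; \sqrt{1-a(M)^2}\,\widetilde C_0 \;\ge\; (1-a(M)^2)\,\widetilde C_0,
\end{equation*}
using $\sqrt{1-t}\ge 1-t$ on $[0,1]$. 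For the upper bound the corresponding pointwise estimate is not uniformly controlled (it blows up as $u\to 0$), so one integrates it via the coarea identity $\int_{M\cap S_r}ds/u = A'(r)$ to get $\int_{M\cap S_r}k_g\,ds \le (1+a(M)+\epsilon)A'(r)/r$. The quadratic growth $A(r)\le C_1 r^2$ forces $\liminf_{r\to\infty} A'(r)/r$ to be controlled by $C_1$ (a larger liminf, integrated, would contradict the quadratic bound); since the left-hand side converges by Gauss--Bonnet, its limit equals this liminf, yielding the stated upper bound with the constant $C_1$ supplied in Theorem~\ref{perimeter-th}.

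\textbf{Main obstacle.} The upper bound is the delicate step: unlike in the lower bound, $k_g$ is not pointwise bounded by any fixed multiple of $1/r$ (it blows up near the critical set of $\rho|_M$), so one cannot compare directly to $L(M\cap S_r)$ and must instead use an averaging/coarea argument to convert quadratic area growth into a bound on $\lim\int k_g\,ds$.
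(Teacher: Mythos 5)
Your proposal is correct and takes a genuinely different route from the paper for the upper bound, while the lower bound is close in spirit but cleaner in one respect. For the upper bound, the paper does not use Gauss--Bonnet on extrinsic disks at all: it quotes Shiohama's theorem, which says that for a finite-type surface admitting total curvature the \emph{intrinsic} area growth satisfies $\lim_{t\to\infty} \operatorname{A}(B^M_t(o))/(t^2/2) = 2\pi\chi(M)-\int_M K\,dA$, and then simply observes that the intrinsic geodesic ball $B^M_t(o)$ is contained in the extrinsic ball $D_t(o)$ (since $R\le\rho_M$), so the quadratic extrinsic bound gives the result in one line. Your coarea argument, passing from $\int_{\partial D_r} k_g\,ds \le (1+a+\epsilon)A'(r)/r$ to $\liminf A'(r)/r$ controlled by the quadratic growth constant, is a valid self-contained replacement that avoids invoking Shiohama's external result; this is real added value. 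For the lower bound, the paper also uses Gauss--Bonnet on extrinsic disks, but estimates $k_g$ from below via the cruder chain $1/|\nabla R|\ge 1$ together with Kasue's lemma (their Lemma~\ref{kasue}) to control $|\nabla^\perp\rho_{\mathbb{R}^n}|$ by $\Lambda_c(t)$; your one-variable minimization of $(1-av)/\sqrt{1-v^2}$ over $v\in[0,1)$ sidesteps Kasue's lemma entirely and even yields the sharper pointwise bound $k_g\ge\sqrt{1-(a+\epsilon)^2}/r$ before you weaken $\sqrt{1-t}\ge 1-t$ to match the stated constant. The one caveat, which you already flag implicitly: your upper-bound constant comes out as $2(1+a(M))C_1$ rather than $C_1$; the paper's own proof has a kindred factor-of-$2$ slippage (it uses $A(t)/(t^2/2)$ but the hypothesis is $A(D_r)\le C_1 r^2$), so this is not a flaw in your argument so much as a mismatch with the loosely stated constant in the theorem.
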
 

Cohn-Vossen proved in \cite{CV} that any complete and non-compact surface with nonnegative Gaussian curvature is diffeomorphic to $\erre^2$, or if not, it is flat. By using the above theorem  \ref{theo2} we can therefore state the following corollary 

\begin{corollary}\label{nonnegative}Let $M$ be an oriented surface immersed in $\erre^n$ with nonnegative curvature function ($K\geq 0$) and  tamed second fundamental form. Then, $M$ is diffeomorphic to $\erre^2$. Moreover, in the particular case when $M$ is flat, $M$ is isometric to $\erre^2$.
\end{corollary}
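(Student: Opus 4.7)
The plan is to bootstrap from the tamed second fundamental form hypothesis to finite total curvature, and then apply the Chern--Osserman type inequality of Theorem~\ref{theo2}. Since $M$ has tamed second fundamental form, the Bessa--Jorge--Montenegro result \cite{Pac} recalled in the introduction forces $M$ to be properly immersed and of finite topological type, so $\chi(M)$ is a well defined finite integer. The hypothesis $K\geq 0$ guarantees that the total curvature $\int_M K\,dA$ exists in $[0,+\infty]$ as the monotone limit $\lim_i\int_{\Omega_i}K\,dA$ along any compact exhaustion, so $M$ admits total curvature. Cohn--Vossen's inequality $\int_M K\,dA\leq 2\pi\chi(M)$ then forces $\int_M K\,dA<\infty$, so $M$ has finite total curvature.

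With finite total curvature established, all hypotheses of Theorem~\ref{theo2} are met and we obtain a constant $\widetilde{C}_0>0$ with
\begin{equation*}
\bigl(1-a(M)^2\bigr)\,\widetilde{C}_0\;\leq\; 2\pi\chi(M)-\int_M K\,dA.
\end{equation*}
Since $a(M)<1$ makes the left hand side strictly positive and $\int_M K\,dA\geq 0$, this forces $\chi(M)\geq 1$. Conversely, any oriented non-compact surface of finite topological type is diffeomorphic to a closed genus $g$ surface with $e\geq 1$ punctures, so $\chi(M)=2-2g-e\leq 1$, with equality precisely when $g=0$ and $e=1$. Combining both bounds yields $\chi(M)=1$, which identifies $M$ diffeomorphically with $\erre^2$.

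For the flat case, once $M$ is known to be diffeomorphic to $\erre^2$ it is in particular simply connected; it is also complete and of constant zero sectional curvature, so the Killing--Hopf classification of complete simply connected flat Riemannian manifolds identifies $M$ isometrically with $\erre^2$. The one delicate point in the argument is the initial step: the finiteness of $\chi(M)$ needed for Cohn--Vossen to produce a \emph{finite} upper bound on $\int_M K\,dA$ is precisely what the tamed second fundamental form hypothesis buys us via the Bessa--Jorge--Montenegro finite topology theorem; without that finiteness the whole chain breaks, since Theorem~\ref{theo2} could not be invoked to bound $\chi(M)$ from below.
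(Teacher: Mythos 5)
Your proof is correct, but it takes a genuinely different route from the paper's. The paper first invokes the classical Cohn--Vossen structure theorem (a complete noncompact surface with $K\geq 0$ is diffeomorphic to $\erre^2$, or else it is flat), which disposes of the non-flat case immediately; it then applies Theorem~\ref{theo2} only in the flat case, where $\int_M K\,dA=0$ makes the finite-total-curvature hypothesis trivially satisfied, to force $\chi(M)>0$ and hence $g(M)=0$, $\E(M)=1$, so $M\cong\erre^2$. You bypass the structure theorem entirely: finite topology comes from the tamed second fundamental form, $K\geq 0$ makes $\int_M K\,dA$ a well-defined monotone limit in $[0,+\infty]$, the Cohn--Vossen (or Huber) inequality $\int_M K\,dA\leq 2\pi\chi(M)$ with $\chi(M)$ now finite gives finite total curvature, and then Theorem~\ref{theo2} combined with $\int_M K\,dA\geq 0$ forces $\chi(M)\geq 1$. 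Paired with the elementary bound $\chi(M)=2-2g-e\leq 1$ for a noncompact oriented surface of finite topological type, this yields $\chi(M)=1$ and $M\cong\erre^2$ uniformly, without splitting into flat and non-flat cases. This is arguably cleaner since it leans only on the Cohn--Vossen inequality rather than the full dichotomy; the price is one extra observation (that finite topology gives the finite $\chi(M)$ needed for the inequality to be informative), which you correctly flag as the delicate step. The treatment of the flat case via Killing--Hopf is identical in both.
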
 
  
Petrunin and Tuschmann in \cite{Petrunin2001} , solving a conjecture of Gromov \cite{Ballmann85} (see also \cite{Drees94}), proved that if a complete simply connected manifold $M^n$ of dimension greater than $2$ ($n\geq 3$) has nonnegative sectional curvatures $K\geq 0$  and is \emph{asymptotically flat} then $M$ is isometric to $\erre^n$. Here, asymptotically flat means that
\begin{equation}
k(t)t^{2}\to 0\quad (t\to\infty),
\end{equation}
where $k(t)$ is the supremum of $\vert K\vert$ on $M\setminus B_t(o)$ for some fixed point $o\in M$, ($B_t(o)$ being the geodesic ball of $M$ of radius $t$ centered at $o$). We can extend this intrinsic result to dimension $2$ but using an extrinsic approach. we will say that a surface immersed in $\erre^n$ has \emph{strongly} tamed second fundamental form if for some $\epsilon$, and for a (any) compact exhaustion $\{\Omega_i\}$ of $M$,
\begin{equation}
\lim_{i\to\infty}\sup_{x\in M\setminus \Omega_i}\left\{\rho_M(x)^{1+\epsilon}\Vert\alpha(x)\Vert\right\}<1.
\end{equation}
In such a case we obtain the following corollary
\begin{corollary}\label{cor1.4}
Let $M$ be an oriented surface immersed in $\erre^n$ with nonnegative curvature function ($K\geq 0$) and  strongly tamed second fundamental form. Then,  $M$ is isometric to $\erre^2$.
\end{corollary}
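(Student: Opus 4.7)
The plan is to reduce to Corollary \ref{nonnegative} by showing that under the strong taming hypothesis the Gauss curvature must vanish identically. First, strong taming trivially implies ordinary taming with $a(M)=0$, since $\rho_M(x)\|\alpha(x)\|<\rho_M(x)^{-\epsilon}\to 0$ at infinity. Hence Corollary \ref{nonnegative} already guarantees that $M$ is diffeomorphic to $\mathbb{R}^2$, and the ``moreover'' clause of the same corollary will complete the proof once we know that $M$ is flat.

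To establish flatness, I would first show that $M$ has finite total curvature. For a surface in $\mathbb{R}^n$ the Gauss equation gives $0\leq K(x)\leq \|\alpha(x)\|^2\leq \rho_M(x)^{-2-2\epsilon}$ off a compact set. Combined with $K\geq 0$ (which, by Toponogov/Bishop--Gromov, bounds the intrinsic area of $B_\rho^M$ by $\pi\rho^2$), this yields $\int_M K\, d\text{A}<\infty$. Then Theorem \ref{perimeter-th} guarantees quadratic extrinsic area growth $\text{A}(M\cap B_r(0))\leq C_1 r^2$ for $r$ large, so Theorem \ref{theo2}, specialized to $a(M)=0$ and $\chi(M)=1$, yields
\[
\widetilde C_0\leq 2\pi-\int_M K\, d\text{A}\leq C_1
\]
for any constants $\widetilde C_0,C_1>0$ admissible in the asymptotic bounds $\text{L}(M\cap S_r(0))\geq \widetilde C_0\, r$ and $\text{A}(M\cap B_r(0))\leq C_1 r^2$.

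The key step is to show that $\widetilde C_0$ can be chosen arbitrarily close to $2\pi$, i.e.\ that $\liminf_{r\to\infty}\text{L}(M\cap S_r(0))/r\geq 2\pi$. The natural tool is the Hessian of the extrinsic distance $r(x)=|\varphi(x)|$ on $M$, which decomposes as the pulled-back ambient Hessian, asymptotic to $(g_M-dr\otimes dr)/r$, plus a correction of the form $-\langle \alpha(\cdot,\cdot),\varphi/r\rangle$ of size $O(\|\alpha\|)=o(1/r)$ under strong taming. The Hessian/Jacobi comparison techniques used in \cite{Pac,GPGap} then give $\text{L}(M\cap S_r(0))=2\pi r\,(1+o(1))$ as $r\to\infty$.

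Feeding this back into the Chern--Osserman-type inequality above forces $2\pi\leq 2\pi-\int_M K\, d\text{A}$, hence $\int_M K\, d\text{A}\leq 0$. Combined with $K\geq 0$ this gives $K\equiv 0$, and the flat case of Corollary \ref{nonnegative} finishes the proof. The main obstacle I anticipate is the refined asymptotic estimate $\text{L}(M\cap S_r(0))/r\to 2\pi$: one needs a lower bound on the tangential gradient $|\nabla_M r|$ (to ensure the extrinsic spheres meet $M$ transversally and their lengths can be recovered from the Jacobi equation) and must show that the Jacobi equation along radial directions is an asymptotically flat perturbation of the Euclidean one, which is precisely where the strengthened decay $o(1/\rho_M)$ of $\|\alpha\|$ matters rather than the $O(1/\rho_M)$ allowed by ordinary taming.
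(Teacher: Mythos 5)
Your first few steps are sound: strong taming gives $a(M)=0$, Corollary~\ref{nonnegative} gives $M\cong\erre^2$ (so $\chi(M)=1$), and the pointwise decay $K\leq\|\alpha\|^2 = o(\rho_M^{-2-2\epsilon})$ together with the area bounds yields $\int_M K\,dA<\infty$, hence quadratic extrinsic area growth via Theorem~\ref{perimeter-th}.

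The key step is circular, however. You want to prove $\liminf_{r\to\infty}\mathrm{L}(M\cap S_r(0))/r\geq 2\pi$ and claim that the Hessian/Jacobi comparison machinery of \cite{Pac,GPGap} delivers $\mathrm{L}(M\cap S_r(0))=2\pi r(1+o(1))$. But by Theorem~\ref{theo2} (or Shiohama's theorem together with the comparability of intrinsic and extrinsic balls when $a(M)=0$), the limit $\lim_r \mathrm{L}(\partial D_r)/r$ exists and equals $2\pi\chi(M)-\int_M K\,dA = 2\pi-\int_M K\,dA$. Asserting this limit is $2\pi$ is therefore \emph{exactly} the statement $\int_M K\,dA=0$, which is what you are trying to prove. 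The Jacobi equation along a radial flow line on a positively curved end produces solutions whose asymptotic slope is \emph{strictly less} than $1$; the strong decay of $\|\alpha\|$ forces the slope to converge, but nothing in the comparison argument forces it to converge to $1$. (The slope is fixed by the geometry of the compact core, not by the decay rate at infinity.) So your proposed key estimate has no independent proof and the argument short-circuits.

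The paper closes the gap with a genuinely different tool: quantization of total curvature. From the strong decay $\|\alpha\|\leq c\,t^{-1-\epsilon}$ and linear extrinsic perimeter growth it deduces, by the coarea formula, that $\int_M\|\alpha\|^2\,dA<\infty$. White's theorem (\cite{W}, Theorem~2) then says that for a complete surface in $\erre^n$ with $\int_M\|\alpha\|^2<\infty$, the total Gauss curvature is an integer multiple of $2\pi$. Since $\chi(M)=1$ and $a(M)=0$, the lower bound of Theorem~\ref{theo2} gives $\int_M K\,dA<2\pi$, while $K\geq 0$ gives $\int_M K\,dA\geq 0$. The only multiple of $2\pi$ in $[0,2\pi)$ is $0$, so $K\equiv 0$, and the flat case of Corollary~\ref{nonnegative} finishes. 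This quantization step is the ingredient your proposal is missing, and I do not see a way to get the sharp $2\pi$ asymptotic for the perimeter without it.
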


In proposition \ref{prop3.2} we will show that for a surface $M$ immersed in the Euclidean space $\erre^n$ with tamed second fundamental form, a necessary and sufficient condition to attain  quadratic extrinsic area growth is to have \emph{linear extrinsic perimeter growth}. Namely, there exists a constant $\widetilde C$, such that for any $r$ large enough,
$$
{\rm L}(M\cap S_r(0))\leq \widetilde C r
$$
where here $S_r(0)$ stands for the geodesic sphere of radius $r$ centered at $0\in \erre^n$.

It is also interesting to study the fundamental tone of submanifolds with tamed second fundamental form. Recall that the fundamental tone $\lambda^{\ast}(M)$ of a complete and non-compact Riemannian manifold $M$ is given by\[\lambda^{\ast}(M)=\inf\left\{\frac{\int_{M}\vert \nabla u \vert^{2}d\mu}{\int_{M}u^2d\mu},\,  u \in C^{\infty}_{0}(M)\setminus \{0\} \right\}\]It is well known that complete surfaces with finite total curvature are parabolic, see \cite{I1} and the proof of \cite[theorem 12.2]{Markv2003}. Taking into account that surfaces with positive fundamental tone  are hyperbolic surfaces, see \cite{GriExp}, one  concludes that surfaces with finite total curvature has zero fundamental tone as well as the surfaces with tamed second fundamental form with  quadratic extrinsic area growth.  

Observe that tamed second fundamental form implies certain quadratic decay of the Gaussian curvature. Actually, for the fundamental tone of submanifolds with tamed second fundamental form in an ambient space with a pole, we can state something more general using quadratic decay of the curvature again but in a completely different approach.

Let $N$ be a Riemannian manifold with a pole $p$ and radial sectional curvature bounded below \[K^{\rm rad}_{N}(x)\geq  B(\rho_{N}(x))\]   along the rays issuing from $p$, where $B\in C^{\infty}([0,\infty))$. The behavior of this comparison function  imposes restrictions on  the fundamental tone of tamed immersions as we can show in the following theorem.

\begin{theorem}\label{tone-theo}Let $\varphi\colon M \hookrightarrow N$ be an isometric immersion of a $m$-dimensional complete Riemannian manifold into an $n$-dimensional ambient manifold $N$ which possesses a pole and radial sectional curvatures bounded from below and above by
\begin{equation}
 B(\rho_{N}(x))\leq K^{\rm rad}_{N}(x) \leq 0.
\end{equation}
With $B\in C^\infty[0,\infty)$ such that for any $t> 0$,
\begin{equation}
B(t)\geq \frac{-2}{t^2}.
\end{equation}
Suppose moreover that the norm of the second fundamental form of the immersion is tamed. Namely, inequality (\ref{eq1.3}) holds. Then, $M$ has zero fundamental tone $\lambda^*(M)=0$.\end{theorem}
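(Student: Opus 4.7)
My plan is to prove $\lambda^*(M)=0$ by constructing a family of compactly supported test functions $u_R\in C_0^\infty(M)$ whose Rayleigh quotients tend to $0$ as $R\to\infty$. The driving estimate will be a polynomial upper bound for the extrinsic volume $V(R):=\vol\bigl(M\cap B_R^N(p)\bigr)$, derived from a Laplacian comparison for the extrinsic distance $r:=\rho_N\circ\varphi$. Applying the Hessian Comparison Theorem to $\rho_N$ on $N$, the lower curvature bound $K_N^{\rm rad}\geq B(\rho_N)$ will yield the pointwise upper estimate
\[
\Hess_N\rho_N(X,X)\leq \frac{h_B'(\rho_N)}{h_B(\rho_N)}\bigl(|X|^2-\langle X,\nabla\rho_N\rangle^2\bigr),
\]
where $h_B$ solves $h_B''+Bh_B=0$ with $h_B(0)=0$, $h_B'(0)=1$. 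The hypothesis $B(t)\geq-2/t^2$ is precisely the borderline case in which the model solution $h(t)=t^2$ applies, so Sturm comparison via the Riccati equation $u'+u^2+B=0$ gives $h_B'(t)/h_B(t)\leq 2/t$ for $t$ sufficiently large.

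Tracing the above Hessian bound over an orthonormal frame of $M$ in the formula
\[
\Delta^M r=\sum_{i=1}^m\Hess_N\rho_N(e_i,e_i)+\langle\nabla^N\rho_N,\vec{H}\rangle,
\]
and controlling the mean-curvature term via the tamed estimate $r\|\alpha\|<a(M)+\varepsilon<1$ outside a compact set, will yield $\Delta^M r\leq C_1/r$ for $r$ large. A standard Bishop--Gromov comparison then converts this Laplacian bound into polynomial extrinsic volume growth $V(R)\leq C_2 R^{C_1+1}$ for $R$ large (in the spirit of the Gimeno--Palmer volume-growth results cited earlier). With polynomial volume growth in hand, I would introduce the logarithmic cutoff
\[
u_R(x)=\max\bigl\{0,\min\bigl\{1,(2\log R-\log r(x))/\log R\bigr\}\bigr\},
\]
which equals $1$ on $M\cap B_R^N$, vanishes outside $M\cap B_{R^2}^N$, and satisfies $|\nabla u_R|^2\leq(r\log R)^{-2}$ on the annulus. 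A co-area computation using the polynomial bound on $V$ (integrating $\int r^{-2}\,dV$ over the annulus and integrating by parts) then gives $\int_M|\nabla u_R|^2\,dV\leq C_3/\log R$, while $\int_M u_R^2\,dV\geq V(R)$ remains bounded below. The Rayleigh quotient therefore vanishes as $R\to\infty$, so $\lambda^*(M)=0$.

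The main technical obstacle is twofold. First, the Sturm/Riccati comparison extracting $h_B'/h_B\leq 2/t$ from the borderline potential $B\geq-2/t^2$ must be carried out on a half-line $[t_0,\infty)$, since the model potential is singular at $t=0$; behavior near the pole is absorbed by the compact exhaustion appearing in the tamed condition. Second, the passage from the Laplacian estimate to polynomial extrinsic volume growth requires the properness of $\varphi$, which is itself a consequence of the tamed hypothesis (Bessa--Jorge--Montenegro, extended to ambient manifolds with a pole by Gimeno--Palmer). Once these two technical points are in place, the log-cutoff Rayleigh estimate is entirely routine.
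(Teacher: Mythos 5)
Your strategy—derive a Laplacian comparison $\Delta^M r \leq C_1/r$ for the extrinsic distance $r=\rho_N\circ\varphi$, convert this into polynomial extrinsic volume growth, and then run a cutoff argument—is a legitimate alternative to the paper's route. The paper instead applies Barta's theorem to the test function $f=v\circ\rho_N\circ\varphi$ on extrinsic balls, where $v$ is the first radial Dirichlet eigenfunction of a Euclidean ball in $\mathbb{R}^l$; the dimension $l$ is a free parameter chosen large enough (using $th'/h\leq 2$ and $|\nabla^M r|^2\geq 1-\Lambda_c^2$) to absorb the trace $\sum_i\Hess\rho_N(e_i,e_i)$ and the mean-curvature term, after which $\lambda_1\bigl(B(r)\subset\mathbb{R}^l\bigr)\to 0$ finishes the argument. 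The first stages of your plan are sound, with one unstated ingredient: passing from $\Delta^M r\leq C_1/r$ to a volume bound is not literally ``Bishop--Gromov'' because $|\nabla^M r|<1$; you need the uniform lower bound $|\nabla^M r|^2\geq 1-\Lambda_c^2$ that Kasue's lemma gives under the tamed hypothesis, which the paper uses at exactly the parallel point.

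The genuine gap is in the logarithmic cutoff. You obtain $C_1\approx 2m+c$, so the volume exponent $k=C_1+1$ exceeds $2$ (indeed $k\geq 2m+1$). With $V(t)\leq Ct^k$ and $k>2$, the coarea/integration-by-parts computation gives
\[
\int_{D_{R^2}\setminus D_R} r^{-2}\,dV \;=\;\Bigl[t^{-2}V(t)\Bigr]_R^{R^2}+2\int_R^{R^2}t^{-3}V(t)\,dt\;\lesssim\; R^{2(k-2)},
\]
so $\int_M|\nabla u_R|^2\,dV\lesssim R^{2(k-2)}/(\log R)^2$, which diverges; your claimed bound $C_3/\log R$ requires $k\leq 2$. (Already in flat $\mathbb{R}^n$ with $n\geq 5$ the log-cutoff fails in exactly this way, even though $\lambda^*(\mathbb{R}^n)=0$.) Logarithmic cutoffs control the Dirichlet energy only under essentially quadratic growth. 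To conclude $\lambda^*(M)=0$ from a polynomial bound of arbitrary degree, replace $u_R$ by the linear cutoff $u_R=\max\{0,\min\{1,\,2-r/R\}\}$, for which the Rayleigh quotient is at most $V(2R)/\bigl(R^2V(R)\bigr)$; a polynomial upper bound on $V$ forces $\liminf_{R\to\infty}V(2R)/(R^2V(R))=0$, since otherwise iterating over dyadic radii would force $V$ to grow super-exponentially. Equivalently, invoke the Brooks--Cheng--Yau estimate $\lambda^*(M)\leq\tfrac14\bigl(\limsup_{R\to\infty}R^{-1}\log V(R)\bigr)^2$. With that substitution your proof closes.
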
Observe that the identity map ${\rm id}:N\to N$ induces an isometric immersion from $N$ to $N$ with vanishing second fundamental form, so with tamed second fundamental form. We can therefore state a purely intrinsic counterpart of theorem \ref{tone-theo}
\begin{corollary}Let $N$ be an $n$-dimensional Riemannian manifold $N$ which possesses a pole and radial sectional curvatures bounded from below and above by
\begin{equation}
 B(\rho_{N}(x))\leq K^{\rm rad}_{N}(x) \leq 0.
\end{equation}
With $B\in C^\infty[0,\infty)$ such that for any $t> 0$,
\begin{equation}
B(t)\geq \frac{-2}{t^2}.
\end{equation}Then, $N$ has zero fundamental tone, $\lambda^*(N)=0$.\end{corollary}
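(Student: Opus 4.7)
The plan is to derive this corollary as an immediate specialization of Theorem \ref{tone-theo}, taking as the isometric immersion $\varphi\colon M\hookrightarrow N$ the identity map $\mathrm{id}\colon N\to N$ with $M:=N$. Under this choice the source and target manifolds coincide, so the hypothesis on the radial sectional curvatures of the ambient manifold $N$ translates verbatim into the hypothesis imposed on $N$ in the corollary, namely
\[
B(\rho_N(x))\leq K^{\rm rad}_N(x)\leq 0,\qquad B(t)\geq -\frac{2}{t^2}\ \text{for all } t>0.
\]
It therefore suffices to check that the identity immersion is tamed in the sense of \eqref{eq1.3}, after which Theorem \ref{tone-theo} directly yields $\lambda^*(N)=0$.

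To verify that the identity map has tamed second fundamental form, I would simply observe that $\mathrm{id}\colon N\to N$ is an isometry, so its second fundamental form $\alpha$ vanishes identically, and hence $\Vert\alpha(x)\Vert=0$ for every $x\in N$. Consequently, for any compact exhaustion $\{\Omega_i\}$ of $N$,
\[
a(N)=\lim_{i\to\infty}\Bigl(\sup_{x\in N\setminus\Omega_i}\bigl\{\rho_N(x)\Vert\alpha(x)\Vert\bigr\}\Bigr)=0<1,
\]
which is the strongest possible form of the tamed hypothesis. Invoking Theorem \ref{tone-theo} with this choice of $\varphi$ gives $\lambda^*(N)=\lambda^*(M)=0$, as required.

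There is no substantive obstacle in this argument: the entire analytic content is already packaged into Theorem \ref{tone-theo}, and the corollary is the trivial case $\alpha\equiv 0$ of that theorem, made possible by the fact that tameness is a condition of the form $\rho\Vert\alpha\Vert<1$, which is automatic when the second fundamental form vanishes. The only point that requires any attention is checking that the ambient-curvature bounds in Theorem \ref{tone-theo} are exactly what is assumed here, which is immediate when $M=N$.
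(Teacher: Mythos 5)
Your proposal is exactly the paper's own argument: the paper explicitly observes that the identity map $\mathrm{id}\colon N\to N$ is an isometric immersion with vanishing second fundamental form, hence trivially tamed, and then applies Theorem \ref{tone-theo}. The argument is correct and complete.
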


Much efford has been made in the understanding of Gap phenomenon for Riemannian manifolds. The classical results in this field (see for instance \cite{Siu-Yau, GreW2, Shiohama, GrePetZhu} and references therein) state that assuming certain kind of faster than quadratic decay of the curvature one obtains flatness and isometry to the Euclidean space. For example, by using theorem 1 of \cite{Shiohama},  any manifold $N^n$ with  a pole, dimension $n>2$, sectional curvatures $K_N$ bounded from below and above by $B(\rho_{N}(x))\leq K_{N}(x) \leq 0$, and with  bounding function $B\in C^{\infty}[0,\infty)$ satisfying 
\begin{equation}
\limsup_{t\to\infty}t^2B(t)=0,
\end{equation} 
is isometric to $\erre^n$.  Observe, moreover that in the most part of this paper is assumed only quadratic decay, or even slower than quadratic decay of the sectional curvature.
\section{Preliminaries}
Throughout this paper we shall study geometric and analytic properties of submanifolds immersed in an ambient Riemannian manifold with a pole. Recall that a Riemannian manifold $N$ is a Riemannian manifold with a pole if there exists a point $p\in N$ with empty cut locus, ${\rm cut}(p)=\emptyset$. In such a case the exponential map $\exp_p:T_pN\to N$ induces a diffeomorphism between $T_pN$ and $N$, and the distance function 
$$
\rho_N:N\to \erre,\quad x\to\rho_N(x)={\rm dist}_N(p,x),
$$
is a smooth function in $N\setminus\{p\}$. We will suppose moreover that the radial sectional curvatures of $N$, along the geodesics issuing from $p$, are bounded from above

\begin{equation}
\label{eqCurv}K_{N}(x)\leq - G(\rho_{N}(x))
\end{equation}
where $G\colon\mathbb{R}\rightarrow\mathbb{R}$ is a smooth
even function. Let $h$ be the solution of the following Cauchy problem
\begin{eqnarray}\label{eqh}
\left \{
\begin{array}{l}
h'' -Gh=0 \\
h(0)=0, h'(0)=1\\
\end{array} \right.
\end{eqnarray}
and let $I=[0,r_{0})\subseteq [0,\infty)$ be the maximal interval where $h$ is positive. If $G$ satisfies 
\begin{equation}\label{eqBMR-memoirs} t\int_{t}^{\infty}G_{-}(s)ds
\leq \frac{1}{4},
\end{equation} since it was shown that in this condition that
$h'\geq 0$, see \cite[Prop. 1.21]{bmr}, then $I=[0, +\infty)$. The Hessian Comparison Theorem states that \begin{equation}\label{eqBF6}
{\rm\hess}\, \rho_{N}(y)\geq \displaystyle\frac{h'}{h}(\rho_{N}(y))\{ \langle,\rangle - d\rho_{N}\otimes d\rho_{N}\}
\end{equation} in the sense of quadratic forms. If
\[
K_{N}(x)\geq -G(\rho_N(x))
\]
 then
\begin{equation}\label{eqBF6-b}
{\rm\hess}\, \rho_{N}(y)\leq \displaystyle\frac{h'}{h}(\rho_{N}(y))\{ \langle,\rangle - d\rho_{N}\otimes d\rho_{N}\}
\end{equation}See   \cite{PRS} and references therein.
 
 Associated to $N$, there are  $m$-dimensional model manifolds  $\mathbb{M}_{h}^{m} =  [0,\infty)  \times\mathbb{S}^{m-1}$ with  the metric $ds^{2}_{h}=dr^2+h^{2}\left(r\right)d\theta^{2} $, for every $m\geq 2$, where $h$ is the solution of \eqref{eqh}. Observe that these models have, 
radial sectional curvatures $-G\left(  r\right)  $. Let   $\varphi \colon M \hookrightarrow N$ be an isometric immersion  of a complete Riemannian $m$-manifold $M$ into  $N$. Let $x_0 \in M$ and let $\rho_{M} (x) = {\rm dist}_{M}(x_0, x)$ be the  distance function on $M$ to $x_0$.
Let $\{K_i\}_{i=0}^{\infty}$ be an exhaustion sequence of $M$ by nesting
compacts sets $K_i\subset K_{i+1}$ with $x_0 \in K_0$. Let $\{a_{i}(M)\}\subset [0,\infty]$ be a non-increasing
sequence of numbers defined  by
\[
\begin{array}{ccl}
a_i(M) = \sup \left \{ \displaystyle
\frac{h}{h'}(\rho_{M} (x))\cdot
\Vert \alpha (x)\Vert, \, x \in M \backslash K_i \right \},
\end{array}
\]
where $\Vert \alpha(x)\Vert $ is the  norm of the second fundamental form at $\varphi(x)$.
It is straightforward to show that the number  $a(M)=\displaystyle\lim a_i$ is independent on the sequence $\{K_{i}\}$ and on  $x_{0}$.

\begin{definition}
The immersion $\varphi$ has tamed second fundamental
form if $a(M)< 1$.
\end{definition}

Consider a smooth function $g:N\to \erre$ and the restriction $f=g\circ \varphi$. Identifying $X$ with $d\varphi(X)$ we have at $q\in M$ and for every $X\in T_qM$ that
\begin{equation}
\langle \nabla f,X\rangle=df(X)=dg(X)=\langle \nabla g,X\rangle.
\end{equation}
Hence we write
\begin{equation}
\nabla g=\nabla f+\nabla^{\perp}g,
\end{equation}
where $\nabla^{\perp}g$ is perpendicular to $T_qM$. In particular, for the extrinsic distance function $R=\rho_N\circ \varphi$
\begin{equation}
\nabla \rho_N=\nabla R+\nabla^\perp \rho_N.
\end{equation}

The following result, due to Gimeno-Palmer \cite{GPGap}, extends Bessa-Montenegro-Jorge \cite{Pac} and Bessa-Costa \cite{Pac2}. We shall present our proof of theorem \ref{logan} for the sake of completeness and to clarify the notation used in the paper.

\begin{theorem}[Gimeno-Palmer]\label{logan}Let $N$ be a Riemannian manifold with a pole  and radial sectional curvature $K_{N}^{\rad}(x)\leq -G(\rho_{N}(x))$, $G$ satisfying \eqref{eqBMR-memoirs}. If $\varphi\colon M \hookrightarrow N$ be an isometric immersion    of a complete Riemannian manifold with tamed second fundamental form then\begin{itemize}\item[i.] $\varphi$ is proper.\item[ii.] $M$ has finite topology.\end{itemize}
\end{theorem}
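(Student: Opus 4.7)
The strategy I would follow is to construct a single smooth proper exhaustion function on $M$ that is uniformly convex outside a sufficiently large compact set, and read off both (i) and (ii) from its convexity. The natural choice is $f:=F\circ R$, where $R=\rho_N\circ\varphi$ is the extrinsic distance and $F(r):=\int_0^r h(s)\,ds$ is the primitive of the comparison warping function. The first step is to compute $\Hess^M f(X,X)$ using the chain rule, the submanifold Hessian formula and the Gauss formula; the Hessian Comparison Theorem \eqref{eqBF6} controls the ambient part from below, while the curvature term is dominated by $h(R)\|\alpha\|$. Combining these gives, for every $X\in T_qM$,
\begin{equation*}
\Hess^M f(X,X)\ge h'(R)\Bigl(1-\tfrac{h}{h'}(R)\|\alpha(x)\|\Bigr)|X|^2 .
\end{equation*}
Using the tamed condition on $\tfrac{h}{h'}(\rho_M)\|\alpha\|$, together with $R\le\rho_M$ and the monotonicity of $h/h'$, this reduces on $M\setminus K_{i_0}$ to $\Hess^M f\ge(1-a_{i_0}(M))h'(R)\,\langle\cdot,\cdot\rangle$, with $a_{i_0}(M)<1$ as soon as $i_0$ is taken large enough, which is possible because $a_i(M)\to a(M)<1$.

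To prove (i) I would argue by contradiction: assuming a sequence $p_k\in M$ with $\rho_M(p_k)=L_k\to\infty$ and $R(p_k)\le c$, take unit speed minimising geodesics $\gamma_k:[0,L_k]\to M$ from $x_0$ to $p_k$ and consider $r_k(t):=R(\gamma_k(t))$. Since $\rho_M(\gamma_k(t))=t$, the Hessian bound above, evaluated on $\gamma_k'$, produces the one-dimensional inequality
\begin{equation*}
(F\circ r_k)''(t)\ge(1-a_{i_0}(M))h'(r_k(t))\ge\delta>0
\end{equation*}
on $[T_{i_0},L_k]$, with $\delta$ depending only on $c$ because $h'$ is continuous and positive on $[0,c]$. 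Integrating twice and using that $r_k(T_{i_0})$ and $(F\circ r_k)'(T_{i_0})$ are bounded uniformly in $k$ (since $\gamma_k(T_{i_0})$ lies in the compact ball $\overline{B_M(x_0,T_{i_0})}$), $F(r_k(L_k))$ must grow at least quadratically in $L_k$, contradicting $r_k(L_k)\le c$. Hence $R$, and therefore $\varphi$, is proper.

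For (ii), once (i) is in hand, $f$ is a proper smooth exhaustion of $M$ and $\Hess^M f>0$ outside $K_{i_0}$, so every critical point of $f$ lying there is a strict local minimum. Applied to a hypothetical sequence of critical points $q_k\in M\setminus K_{i_0}$ escaping every compact set, properness forces $R(q_k)\to\infty$, so $h(R(q_k))>0$ and thus $\nabla R(q_k)=0$. Along a minimising geodesic from $x_0$ to $q_k$ this reads $(F\circ r_k)'(L_k)=0$; combined with the convexity of $F\circ r_k$ on $[T_{i_0},L_k]$ it forces $F\circ r_k$ to be non-increasing on that interval, so $r_k(L_k)\le r_k(T_{i_0})\le\sup_{\overline{B_M(x_0,T_{i_0})}}R<\infty$, contradicting $R(q_k)\to\infty$. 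Hence the critical set of $f$ is compact, and the gradient flow of $f/|\nabla f|^2$ on the complement of a regular sublevel set (available by Sard) identifies $M$ with the interior of a compact manifold with boundary, giving finite topology.

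The point I expect to be most delicate is the monotonicity of $h/h'$ used above to pass from the tamed bound on $\|\alpha\|\cdot\tfrac{h}{h'}(\rho_M)$ to a bound on $\|\alpha\|\cdot\tfrac{h}{h'}(R)$. When $G\le 0$ this is immediate from the Riccati identity $(h/h')'=1-G(h/h')^2\ge 1$, but for a general $G$ satisfying only the integral condition \eqref{eqBMR-memoirs} one has to extract the sign of $(h/h')'$ directly from the ODE \eqref{eqh} and the control on $\int G_-$; this analytic input is what genuinely separates the present statement from its Euclidean predecessor in \cite{Pac}, and once it is settled the convexity manipulations sketched above become essentially mechanical.
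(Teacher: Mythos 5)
Your proof of part (i) is essentially the same as the paper's: both introduce $f=F\circ R$ with $F(r)=\int_0^r h$, bound $\Hess^M f$ from below via the Hessian Comparison Theorem plus the tamed condition, and then integrate along minimizing geodesics from $x_0$; the paper integrates directly to get a lower bound on $f$, while you argue by contradiction, but these are cosmetic variants of the same computation. You are also right to flag the passage from $\frac{h}{h'}(\rho_M)\|\alpha\|<c$ to $\frac{h}{h'}(\rho_N)\|\alpha\|<c$ as the genuinely delicate analytic step: the paper uses it silently in arriving at its inequality \eqref{eqf}, and it indeed requires $h/h'$ non-decreasing (together with $\rho_N\circ\varphi\le\rho_M$) under the mere integral hypothesis \eqref{eqBMR-memoirs} rather than under sign conditions on $G$. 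One small patch to your argument for (i): the claim $(F\circ r_k)''(t)\ge\delta>0$ with $\delta$ depending only on $c$ presumes $r_k(t)\in[0,c]$ for all $t\in[T_{i_0},L_k]$, which is not automatic. It is, however, recoverable: if $r_k$ exceeded its terminal value at an interior maximum, then at that maximum $(F\circ r_k)'=0$, and convexity would force $F\circ r_k$ non-decreasing afterward, contradicting $r_k(L_k)\le c$; so in fact $r_k$ stays in a compact range determined by $r_k(T_{i_0})$ and $c$, and $\delta$ can be chosen accordingly.

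For part (ii) your argument is genuinely different from the paper's and, I would say, cleaner. The paper constructs the vector field $\nu$ on the level sets $\Gamma_{r}$, sets up the flow $\xi(t,y)$ of $\nu/\psi$, derives the Riccati-type ODE \eqref{eqdif1} for the angle $\sin\beta=\sqrt{1-\psi^2}$, and shows via \eqref{eq39} that $\sin\beta<1$ for all $t$, so the flow never degenerates and maps $\partial D_{r_0}$ diffeomorphically onto $\partial D_{r_0+t}$. Your argument instead reuses the convexity of $f$ already established in (i): at any critical point $q$ of $f$ outside the compact set, $(F\circ r)'$ vanishes at the endpoint of the minimizing geodesic from $x_0$, and convexity then forces $F\circ r$ to be non-increasing on $[T_{i_0},\rho_M(q)]$, bounding $R(q)$ by the value of $R$ on a fixed compact ball; properness then gives the contradiction. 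This shows the critical set of $f$ (equivalently of $R$) is compact, and the normalized gradient flow yields finite topology. Your route is shorter, avoids the ODE for $\sin\beta$ entirely, and makes the logical dependence on convexity transparent; what the paper's flow argument buys in addition is the explicit product structure of each end (the diffeomorphism $\partial D_{r_0}\cong\partial D_{r_0+t}$ for all $t$), which is later recorded as item (4) of Theorem \ref{tamed-theorem}, but for the statement of Theorem \ref{logan} itself your argument suffices.
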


\begin{proof}
Since $a(M)< 1$ there exists $c\in (0,1)$ such that $a(M)< c<1$. Thus, there exists an
 $i\in \mathbb{N}$ such that $a(M)<a_i(M)<c$. This means that there
exists a  geodesic ball $B_{M}(r_0)\subset M$, with $K_i \subset
B_{M}(r_0)$, centered at $x_0$ with radius $r_0 > 0$ such that
\begin{equation}
\displaystyle \frac {h}{h'}(\rho_M
(x))\cdot\Vert \alpha(x)\Vert< c <1,\;\;\;
 {\rm for \;\; all}\;\; x \in M \backslash B_{M}(r_0).
\end{equation}
To fix the notation,  let $x_{0}\in M$, $p=\varphi (x_{0})$ and recall that $\rho_{M}(x)={\rm dist}_{M}(x_{0},x)$ and $\rho_{N}(y)={\rm dist}_{N}(p,y)$. Letting $\phi(t)=\int\limits_{0}^{t}h(s)ds$ define $f\colon M \to \mathbb{R}$ by $f=\phi\circ \rho_{N}\circ \varphi$.  It is straightforward to compute that for $X\in T_{x}M$, (identifying $d\varphi X=X$)
\begin{eqnarray}
\hess_{M} f(x)(X,X)&=& \hess_{N}\phi\circ \rho_{N}(\varphi(x))( X, X) + \langle{\rm\grad}\phi\circ\rho_{N},\alpha(X,X)\nonumber\\
&=& h'(\rho_{N})\langle{\rm\grad}\rho_{_N},X\rangle^2 +h(\rho_{N})
{\rm\hess}\rho_{N}(d\varphi X,d\varphi X)\\
&&  
+h(\rho_{N})\langle{\rm\grad}\rho_{N},\alpha(X,X)\rangle.\nonumber
\end{eqnarray}
By the Hessian Comparison theorem, we have that, 
\begin{equation}
\hess\rho_{N}(y)( X,X) \geq \displaystyle
 \frac{h'}{h}\left\{\Vert  X\Vert^2-\langle \grad \rho_{N},X\rangle^{2}\right\}
\end{equation} Therefore
for every $x \in M \backslash B_{M}(r_0)$ we have,
\begin{eqnarray}\label{eqf}
{\rm\hess} f(x)(X,X) &\geq&  h'(\rho_{N})\langle \grad \rho_{N}, X\rangle^2 +h(\rho_{N})[
\displaystyle \frac{h'}{h}(\rho_{N})\Vert X\Vert^2-\langle \grad \rho_{N},X\rangle^{2}]\nonumber \\ && -h(\rho_{N})\Vert \alpha\Vert\cdot\Vert X\Vert^2\nonumber \\
&=& h'(\rho_{N})\cdot\Vert X\Vert^2-h(\rho_{N})\cdot\Vert \alpha\Vert\cdot\Vert X\Vert^2 \\
&\geq& 
 h'(\rho_{N})\cdot(1-c)\cdot\Vert X\Vert^2.\nonumber
\end{eqnarray}

Let $\sigma : [0,\rho_{M}(x)] \rightarrow M$ be a minimal geodesic
joining $x_0$ to $x$. For all $t > r_0 $  we have  that $(f\circ
\sigma)''(t) = \hess f(\sigma(t))(\sigma' , \sigma')\geq h'(t)(1-c)$, where $h'(t)=h'(\rho_{N}(\varphi(\sigma(t))))$.

\noindent For $t \leq r_0$ we have that $(f\circ\sigma)''(t) \geq  b
= \inf\left\{{\rm\hess} f(x)(\nu,\nu),x \in B_{M}(r_0), \vert
\nu \vert = 1\right\}$. Hence
\begin{equation}
\begin{array}{ccl}
(f\circ \sigma)'(s) &=& (f\circ \sigma)'(0) +  \int_0^s (f \circ \sigma)''(\tau)d\tau\\
\\
&\geq& (f\circ \sigma)'(0)+  \int_0^{r_0} b \,d\tau + \int_{r_0}^s h'(\tau)(1 - c) d\tau\\
\\
&\geq& (f\circ \sigma)'(0)+ b\,r_0 + (1 - c)(h(s)-h(r_{0})).\\
\end{array}
\end{equation}
Now, since $\varphi (x_0)=p$, $\rho_{N}(\varphi(x_0))=0$ then
$(f\circ \sigma)'(0) = 0$,
 and $f(x_0)=0$, therefore
\begin{equation}
\begin{array}{ccl}
f(x)&=&  \int_0^{\rho_{M}(x)} (f\circ \sigma)'(s)ds\\
\\
&\geq&  \int_0^{\rho_{M}(x)}\left \{  b\,r_0 + (1 -c)(h(s) - h(r_0))\right\}ds\\
\\
&\geq&  b\,r_0 \,\rho_{M} (x) - (1-c)h(r_0)\rho_{M}(x) + (1-c)\int_0^{\rho_{M}(x)}h(s)ds\\
\\
&\geq& ( br_0 - (1-c)h(r_0))\rho_{M}(x) + (1-c)\int_0^{\rho_{M}(x)}h(s)ds\\
\end{array}
\end{equation}
Thus
\begin{equation}
 \phi(\rho_{N}( \varphi(x)) )\geq  ( br_0 - (1-c)h(r_0))\rho_{M}(x) + (1-c)\int_0^{\rho_{M}(x)}h(s)ds\label{eqP1}
\end{equation}
for all $x \in M$. Then we have that $\rho_{N}(\varphi)\to \infty $ when  $\rho_{M}(x)\to \infty$, and $\varphi$ is therefore proper.
Now let $ B_{N}(r_0)$ be the geodesic ball of $ N$ centered at $p$ with radius
$r_{0}$ and $S_{N}(r_0) =\partial B_{N}(r_0)$. Since  $\varphi$ is proper and  $a(M)<1$  we can take $r_0$ so that
\begin{equation}
\displaystyle \frac {h}{h'}(\rho_{M} (x))\Vert \alpha(x)\Vert \leq c <1,\;\;\;
{\rm for \;\; all}\;\; x \in M \backslash \varphi^{-1}(B_{N}(r_0))
\end{equation}
and by Sard's Theorem, see \cite[p.79]{DFN}, $r_{0}$ can be chosen so that
 $\Gamma_{r_0} = {\varphi(M)} { \displaystyle \cap} {S_{N}(r_0)}
\not = \emptyset $ is  a submanifold of ${\rm dim} \,\Gamma_{r_0}=
m-1$. For each $y\in \Gamma_{r_0}$, let us denote by
$T_y\Gamma_{r_0} \subset T_y\varphi(M)$ the tangent spaces of
$\Gamma_{r_0}$ and $\varphi(M)$ at $y$, respectively. Since  ${\rm dim}\, T_y\Gamma_{r_0} = m - 1$ and  ${\rm
dim}\,T_y\varphi(M)=m$, there exists only one unit vector $\nu(y) \in
T_y\varphi(M)$  such that \[ T_y\varphi(M) = T_y\Gamma_{r_0} \oplus
[[\nu(y)]], \] with $\langle \nu(y),\grad \!\rho_{N}(y)\rangle > 0$.
This defines a smooth vector field $\nu$ on a neighborhood $V$
of $\varphi^{-1}(\Gamma_{r_0})$. Here $[[\nu(y)]]$ is the
vector space generated by $\nu(y)$. Consider the function on
$\varphi(V)$ defined by
\begin{equation}\label{psi}
\begin{array}{ccl}
\psi(y)=\langle \nu,\grad \rho_{N}\rangle(y) = \langle
\nu,{\rm\grad}R \rangle (y)= \nu(y)(R),\,y=\varphi (x).
\end{array}
\end{equation}
Then $\psi(y) = 0$ if and only if every $x = \varphi^{-1}(y) \in V $
is a critical  point of the extrinsic distance function $R= \rho_{N} \circ \varphi$. Now for
each $y \in \Gamma_{r_0}$ fixed, let us consider the solution
$\xi(t,y) $  of the following Cauchy problem on $\varphi(M)$:
\begin{equation}\label{cauchy}
\left\{
\begin{array}{ccl}
\xi_t(t,y)&=&\displaystyle\frac{1}{\psi}\,\nu(\xi(t,y))\\
\\
\xi(0,y) &=& y\\
\end{array}
\right.
\end{equation}

We will prove that along  the integral curve $t \mapsto\xi(t,y)$
there are no critical points for $R$. For
this, consider the function $(\psi \circ \xi)(t,y)$ and observe that
\begin{equation}
\begin{array}{ccl}
\psi_t &=& \xi_t \langle {\rm\grad} \rho_{N} , \nu \rangle \\
\\
&=& \langle \overline{ \nabla}_{\xi_t} {\rm\grad} \rho_{N},\nu \rangle + \langle  {\rm\grad} \rho_{N}, \overline{ \nabla}_{\xi_t}\nu\rangle\\
\\
&=&\displaystyle\frac{1}{\psi} \langle \overline{ \nabla}_{\nu} {\rm\grad}
\rho_{N}, \nu  \rangle +
 \displaystyle \frac{1}{\psi}\langle  {\rm\grad} \rho_{N},\nabla_{\nu}\nu + \alpha(\nu,\nu)\rangle\\
\\
&=& \displaystyle\frac{1}{\psi} {\rm\hess} \rho_{N}(\nu,\nu) +
\frac{1}{\psi}\left[\langle {\rm\grad} \rho_{N},\nabla_{\nu}\nu \rangle + \langle {\rm\grad} \rho_{N}, \alpha(\nu,\nu)\rangle \right] \\
\\
&=& \displaystyle\frac{1}{\psi} \left[{\rm\hess} \rho_{N}(\nu,\nu) +
\langle\grad \rho_{N},\nabla_{\nu}\nu \rangle +
 \langle {\rm\grad}\rho_{N}, \alpha(\nu,\nu)\rangle \right].
\end{array}
\end{equation}
Thus
\begin{equation}\label{eqpsi}
\begin{array}{ccl}
\psi_t \psi &=& \hess \rho_{N}(\nu,\nu) + \langle\grad
\rho_{N},\nabla_{\nu}\nu \rangle + \langle\grad \rho_{N},
\alpha(\nu,\nu)\rangle
\end{array}
\end{equation}
Since  $ \langle \nu,\nu \rangle = 1$, we have at once that $\langle
\nabla_{\nu}\nu,\nu \rangle = 0$. As $\nabla_{\nu}\nu \in T_{x}M$,
we have that
$$
\langle{\rm\grad} \rho_{N},\nabla_{\nu}\nu \rangle
=\langle{\rm\grad} R,\nabla_{\nu}\nu \rangle.  $$ By equation
(\ref{psi}), we can write ${\rm \grad} R(x) =
\psi (\varphi(x))\cdot \nu(\varphi(x))$.  Since \[\grad R(x)\perp T_{\varphi
(x)}\Gamma_{\rho_{N} (y)},\] ($\Gamma_{\rho_{N} (y)}=\varphi(M)
\displaystyle \cap  S_{N}(\rho_{N} (y))$). Then
$$
\langle{\rm\grad} \rho_{N},\nabla_{\nu}\nu \rangle
=\langle{\rm\grad} R,\nabla_{\nu}\nu \rangle=
 \psi \langle\nu,\nabla_{\nu}\nu \rangle =0 .
$$
Writing
\begin{equation}\label{eqnu}
\nu(y) = \cos\beta(y)\; \grad \rho_{N} + \sin\beta(y)\; \omega
\end{equation}
and
\begin{equation}\label{eqro}
\grad \rho_{N} (y) = \cos \beta\; \nu(y) + \sin \beta\; \nu^*
\end{equation}
where $\langle \omega,\grad  \rho_{N} \rangle = 0$ and $\langle \nu,
\nu^* \rangle = 0$, the equation (\ref{eqpsi}) becomes
\begin{equation}
\begin{array}{ccl}
\psi_t \psi = \sin^2\beta\; \hess \rho_{N}(\omega,\omega) + \sin\beta \; \langle \nu^*, \alpha(\nu,\nu) \rangle. \\
\end{array}
\end{equation}
From (\ref{eqnu}) we have that $\psi(y)= \cos \beta(y)$
\begin{equation}
\psi_t \psi=\sqrt{1 - \psi^2}\sqrt{1 - \psi^2}\hess\rho_{N}(\omega,\omega) + \sqrt{1 - \psi^2}\langle \nu^*,\alpha(\nu,\nu)\rangle.\\
\end{equation}
Hence
\begin{equation}
\begin{array}{ccl}
\displaystyle\frac{\psi_t \psi}{\sqrt{1 - \psi^2}} &=& \sqrt{1 - \psi^2}\hess\rho_{N}(\omega,\omega) + \langle \nu^*,\alpha(\nu,\nu)\rangle.\\
\end{array}
\end{equation}
Thus we arrive at the following differential equation
\begin{equation}\label{eqdif1}
\begin{array}{ccl}
-(\sqrt{1 - \psi^2})_t &=&\sqrt{1 - \psi^2} \; \hess\rho_{N}(\omega,\omega) + \langle \nu^*,\alpha(\nu,\nu)\rangle\\
\end{array}
\end{equation}
The Hessian Comparison Theorem implies that
\begin{equation}
\hess\rho_{N}(\omega,\omega) \geq \displaystyle \frac{h'}{h}( \rho_{N}(\xi(t,y))).\\
\end{equation}
Substituting it in the equation $(\ref{eqdif1})$  obtain the
following inequality
\begin{equation}\label{eqdif2}
\begin{array}{ccl}
-(\sqrt{1 - \psi^2})_t &\geq& \sqrt{1 - \psi^2}\; \displaystyle \frac{h'}{h}(\rho_{N}(\xi(t,y)))\;+ \langle
\nu^*,\alpha(\nu,\nu)\rangle.\\
\end{array}
\end{equation}
Denoting  by $R(t,y)$  the restriction of $R=\rho_{N}\circ \varphi$ to $\varphi^{-1}(\xi(t,y))$ we have \[R(t,y)= R(\varphi^{-1}(\xi(t,y))) = \rho_{N}(\xi(t,y))\] On the other hand
we have that
\begin{equation}
R_t = \langle{\rm \grad} R,\frac{1}{\psi}\nu\rangle = \langle \psi
\nu, \frac{1}{\psi}\nu \rangle = 1
\end{equation}
then
\begin{equation}
R(t,y)= t + r_0.
\end{equation}
Writing  $\displaystyle \frac{h'}{h}(\rho_{N}(\xi(t,y))) = \displaystyle \frac{h'}{h}(t + r_0) $ in $(\ref{eqdif2})$ we have
\begin{equation}\label{eqdif3}
\begin{array}{ccl}
-(\sqrt{1 - \psi^2})_t \geq \sqrt{1 - \psi^2} \; \displaystyle\frac{h'}{h}(t + r_0) + \langle \nu^*,\alpha(\nu,\nu)\rangle\\

\end{array}
\end{equation}
Multiplying $(\ref{eqdif3})$ by $h(t + r_0)$, obtain
\begin{eqnarray*}
\begin{array}{ccl}
-\left[h(t + r_0)(\sqrt{1 - \psi^2})_t + h'(t + r_0)\sqrt{1 - \psi^2} \right]&\geq& h(t + r_0)\langle
\nu^*,\alpha(\nu,\nu)\rangle\\
\end{array}
\end{eqnarray*}
The last inequality  can be written as
\begin{equation}\label{eqdif4}
\begin{array}{ccl}
\left[h(t + r_0)\sqrt{1 - \psi^2}\right]_t &\leq& - h(t + r_0) \langle \nu^*,\alpha(\nu,\nu)\rangle\\
\end{array}
\end{equation}
Integrating  $(\ref{eqdif4})$ from $0$ to $t$ the resulting inequality
is the following
\[
S_{\kappa}(t + r_0)\sin\beta(\xi(t,y)) \leq S_{\kappa}(r_0)\sin\beta(y) + \int_0^t{-S_k(s + r_0)\langle\nu^*,\alpha(\nu,\nu)\rangle ds}
\]
Thus
\begin{equation}\label{eq37}
\sin\beta (\xi(t,y)) \leq\frac{h(r_0)}{h(t + r_0)}\sin\beta(y) + \frac{1}{h(t + r_0)}\int_0^t{h(s +
r_0)(-\langle\nu^*,\alpha(\nu,\nu)\rangle) ds}
\end{equation}
Since  $a(M)< 1$, then
\[
- \langle\nu^*,\alpha(\nu,\nu)\rangle(\xi(s,y)) \leq \Vert\alpha(\xi(s,y))\Vert \leq c \frac{h'}{h}(\rho_{M}(\xi(s,y))) \leq  c
\frac{h'}{h}(\rho_{N}(\xi(s,y))) 
\]But $\displaystyle\frac{h'}{h}(\rho_{N}(\xi(s,y))) = \frac{h'}{h}(s + r_0)$
for every $s \geq 0$. Substituting in (\ref{eq37}), we have
\begin{equation}\label{eq39}
\begin{array}{ccl}
\sin\beta(\xi(t,y))&\leq&\displaystyle\frac{h(r_0)}{h(t + r_0)}\sin\beta(y) + \frac{c}{h(t + r_0)}\int_0^t h'(s +
r_0)ds\\
\\
&=&\displaystyle\frac{h(r_0)}{h(t + r_0)}\sin\beta(y) + \displaystyle\frac {c}{h(t + r_0)}(h(t + r_0)-
h(r_0))\\
\\
&=&\displaystyle\frac{h(r_0)}{h(t + r_0)}(\sin\beta(y) - c) +c \\
\end{array}
\end{equation}
We will show that $\displaystyle\frac{h(r_0)}{h(t + r_0)}(\sin\beta(y) - c) +c  < 1$.
Let $\Upsilon(t)=\displaystyle\frac{h(r_0)}{h(t + r_0)}(\sin\beta(y) - c) +c$. We have that $\Upsilon(0)=\sin \beta <1$ and $\Upsilon'(t)=\displaystyle -\frac{h'(t+r_{0})h(r_{0})}{h^{2}(t+r_{0})}(\sin \beta -c)$. If $\sin \beta \geq c$ then $\Upsilon'(t)\leq 0$ and $\Upsilon(t)\leq \Upsilon(0)$. If $\sin \beta <c$, suppose by contradiction that there exists a $T>0$ such that $\Upsilon(T)>1$. This implies that $0>h(r_{0})(\sin\beta -c)> (1-c)h(T+r_{0})>0$. Then
\begin{eqnarray*}
\sin\beta(\xi(t,y))\leq \displaystyle\frac{h(r_0)}{h(t + r_0)}(\sin\beta(y) - c) +c  < 1
\end{eqnarray*}
for all $t \geq 0$. Therefore, along the integral curve $t
\mapsto\xi(t,y)$, there are no critical point for the function $R(x)
= \rho_{N} (\varphi(x))$ outside the geodesic ball $B_{N}(r_{0})$. The flow  $\xi_{t}$ maps $S_{N}(r_{0})$ diffeomorphically into to $S_{N}(r_{0}+t)$, for all $t\geq 0$. The manifold  $M$ has therefore finite topology, see also \cite{cheeger}. This
concludes the proof of the theorem \ref{logan}.
\end{proof}

Actually, the above theorem is a consequence of the convexity of the extrinsic distance  function on $M\setminus D_{r_0}$, $D_{r_0}=\varphi^{-1}(B_{N}(r_0))$, see approach given in \cite{GPGap}. In particular,

\begin{theorem}\cite{Pac2, Pac, GPGap}\label{tamed-theorem}
Let $\varphi: M \hookrightarrow N$ be an immersion  of a complete Riemannian
$m$-manifold $M$ into an  $n$-dimensional ambient manifold $N$ with a pole and radial sectional curvatures$K_N$ bounded from above by
$$
K_N\leq \kappa\leq 0.
$$
Suppose that $\varphi$ has tamed second fundamental form, then:
\begin{enumerate}
\item $\varphi$ is proper.
\item $M$ has finite topology.
\item There exists $r_0\in M$ such that the extrinsic distance function has no critical points in $M\setminus D_{r_0}$.
\item In particular, $M\setminus D_{r_0}$ is a disjoint union  $\cup_k V_k$ of finite number of ends. $M$ has so many ends $\E(M)$ as components $\partial D_{r_0}$ has , and each end $V_k$ is diffeomorphic to $\partial D_{r_0}^k\times [0,\infty)$, where $\partial D_{r_0}^k$ denotes the component of $\partial D_{r_0}$ which belongs to $V_k$.  
\end{enumerate}
\end{theorem}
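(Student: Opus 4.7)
Items (1) and (2) are immediate consequences of Theorem \ref{logan}. With $K_N\leq \kappa\leq 0$, I set $G(t)\equiv -\kappa\geq 0$, which trivially satisfies (\ref{eqBMR-memoirs}) since $G_-\equiv 0$; the Cauchy problem (\ref{eqh}) then yields a function $h$ that is positive with positive derivative on $(0,\infty)$ (explicitly $h(t)=t$ when $\kappa=0$ and $h(t)=\sinh(\sqrt{-\kappa}\,t)/\sqrt{-\kappa}$ when $\kappa<0$), so all hypotheses of Theorem \ref{logan} are met and properness plus finite topology follow at once.

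For item (3), I would reuse the key estimate established inside the proof of Theorem \ref{logan}. Using properness and tamedness I fix $r_0>0$ large enough that $r_0$ is a regular value of $R=\rho_N\circ\varphi$ (Sard's theorem) and such that $\tfrac{h}{h'}(\rho_M(x))\|\alpha(x)\|<c<1$ for every $x\in M\setminus D_{r_0}$. Inequality (\ref{eq39}) then forces $\sin\beta(\xi(t,y))<1$, hence $\psi(\xi(t,y))=\cos\beta>0$, along every integral curve $t\mapsto \xi(t,y)$ of the vector field $\nu/\psi$ issuing from a point $y\in\Gamma_{r_0}$. Since $\nabla R=\psi\,\nu$, no critical point of $R$ can lie on any such integral curve.

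The step I expect to be the most delicate is promoting this from "no critical points along flow lines" to "$\nabla R$ does not vanish on the entire set $M\setminus D_{r_0}$". For this I would show that the flow map $\Phi(y,t)=\xi(t,y)$ is in fact a diffeomorphism from $\Gamma_{r_0}\times[0,\infty)$ onto $M\setminus D_{r_0}$. It is a local diffeomorphism because $\psi>0$ makes its derivative invertible, and the identity $R\circ\Phi(y,t)=r_0+t$ (coming from $R_t=1$ in the proof of Theorem \ref{logan}) together with the properness of $R$ forces $\Phi(\Gamma_{r_0}\times[0,T-r_0])$ to be simultaneously open and closed in $R^{-1}([r_0,T])$; connectedness of each component of the latter, combined with ODE uniqueness for injectivity, would then give the required diffeomorphism, and in particular $\nabla R\neq 0$ on all of $M\setminus D_{r_0}$.

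Finally, item (4) is a direct corollary of this diffeomorphism: restricting $\Phi$ to a component $\Gamma_{r_0}^k\times[0,\infty)$ of its domain yields the product structure $V_k\cong \partial D_{r_0}^k\times[0,\infty)$ of each end, and the number of ends $\E(M)$ equals the number of connected components of $\Gamma_{r_0}$, which is finite because $\Gamma_{r_0}$ is a compact (by properness) smooth $(m-1)$-dimensional submanifold.
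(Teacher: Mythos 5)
Your reduction of items (1)--(2) to Theorem \ref{logan} is correct (with $G\equiv-\kappa$ one has $G_-\equiv 0$, so \eqref{eqBMR-memoirs} holds trivially and $h=S_\kappa$), and your plan for items (3)--(4) --- reusing the flow $\xi$ and the estimate \eqref{eq39} from the proof of Theorem \ref{logan}, then a clopen argument for surjectivity of the flow map --- is precisely the mechanism the paper has in mind when it attributes the statement to \cite{Pac2,Pac,GPGap} and remarks that it follows from the convexity of the extrinsic distance established in that proof. So the approach matches; the paper itself gives no more detail than you do.

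One step you should tighten. You run the clopen argument inside $R^{-1}([r_0,T])$, but a priori this set could have a connected component $C$ disjoint from $\Gamma_{r_0}$, with $\partial C\subset R^{-1}(T)$. Such an ``island'' would contain an interior minimum of $R$, hence a critical point of $R$, and your flow-line estimate never reaches $C$; so, as literally written, the argument is circular on that piece: you would be using ``no critical points'' to rule out the island whose nonexistence you are trying to establish. The remedy is to perform the clopen argument on $M\setminus D_{r_0}$ itself. The set $W=\Phi(\Gamma_{r_0}\times[0,\infty))$ is relatively open (flow-box argument, together with the fact that \eqref{eq39} and compactness of $\Gamma_{r_0}$ give a uniform positive lower bound for $\psi$ along all flow lines issuing from $\Gamma_{r_0}$, so they exist for all $t\geq0$), and relatively closed (if $\xi(t_n,y_n)\to x$ then $t_n=R(\xi(t_n,y_n))-r_0$ converges and $y_n$ subconverges in the compact $\Gamma_{r_0}$, so $x\in W$). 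Finally, any component of $M\setminus D_{r_0}$ disjoint from $\Gamma_{r_0}$ would be simultaneously open and closed in $M$, hence all of $M$, contradicting $x_0\in D_{r_0}$; thus $W=M\setminus D_{r_0}$, there are no critical points of $R$ there, and your conclusions for items (3) and (4) follow.
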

 We will need the following technical lemma due to Kasue, \cite{KS}.
 
\begin{lemma}\label{kasue}{{\cite[see proof of lemma 4]{KS}}}Let $\varphi: M \hookrightarrow N$ be an immersion  of a complete Riemannian
$m$-manifold $M$ into an  $n$-dimensional ambient manifold $N$ with a pole and radial sectional curvatures$K_N$ bounded from above by
$$
K_N\leq \kappa\leq 0.
$$
Suppose that there exists a function $k:\erre\to\erre$ such that $\Vert \alpha \Vert (x)\leq k(R(x))$, $R(x)=\rho_{N}\circ \varphi (x)$, then for any $x\in M\setminus D_{r}$ with $r>r_0$,
\begin{equation}
 \vert \nabla^\perp \rho_{\kan}\vert \leq \delta(R(x))+ \frac{1}{S_{\kappa}(R(x))}\int_{r}^{R(x)}S_{\kappa}(s)k(s)ds.
\end{equation}
The function $\delta(t)$ being a decreasing function such that $\delta\to 0$ when $t\to\infty$ and  $S_\kappa$ being the solution of the following Cauchy problem
\begin{equation}
\left\{
\begin{array}{rll}
\begin{array}{l}
   S_\kappa''(t)+\kappa S_\kappa(t)=0, \\[0.1cm]
   S_\kappa(0)=0, \,\,\,\,\; S_\kappa'(0)=1.
  \end{array}
\end{array}\right.
\end{equation}

\end{lemma}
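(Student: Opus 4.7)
The plan is to revisit the flow-line argument in the proof of Theorem~\ref{logan}, this time starting the flow at the extrinsic level set $\partial D_r$ (with $r>r_0$) instead of at $\partial D_{r_0}$, retaining the second fundamental form term as a source in the differential inequality rather than discarding it via tameness, and using the sharp comparison function $S_\kappa$ associated with the ambient bound $K_N\leq\kappa\leq 0$ in place of the generic solution $h$. Recall from (\ref{psi}) and the orthogonal decomposition $\nabla\rho_N=\nabla R+\nabla^\perp\rho_N$ that $\psi=|\nabla R|=\cos\beta$, so $\sqrt{1-\psi^2}=\sin\beta=|\nabla^\perp\rho_N|$.

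First I would apply Theorem~\ref{tamed-theorem}: since $r>r_0$, the function $R$ has no critical points on $M\setminus D_r$, so the unit vector field $\nu$ and the flow $\xi(t,y)$ analogous to (\ref{cauchy}) are well defined on $M\setminus D_r$, parametrised so that $R(\xi(t,y))=r+t$, and every $x\in M\setminus D_r$ lies on a unique integral curve starting at some $y\in\partial D_r$. Next, I would rerun the derivation leading to (\ref{eqdif4}) verbatim with $h$ replaced by $S_\kappa$: the Hessian Comparison Theorem under the upper bound $K_N\leq\kappa$ gives $\hess\rho_N(\omega,\omega)\geq (S_\kappa'/S_\kappa)(\rho_N)$ for $\omega\perp\grad\rho_N$, and the Cauchy--Schwarz bound $-\langle\nu^*,\alpha(\nu,\nu)\rangle\leq\|\alpha\|$ combined with the hypothesis $\|\alpha(x)\|\leq k(R(x))$ turns the resulting Riccati-type inequality into
\[
\bigl[S_\kappa(r+t)\sqrt{1-\psi^2}\,\bigr]_t \;\leq\; S_\kappa(r+t)\,k(r+t).
\]

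Integrating from $0$ to $t$ with initial value $S_\kappa(r)\sin\beta(y)$, dividing by $S_\kappa(r+t)$, and changing variables $u=r+s$ yields, with $R(x)=r+t$,
\[
|\nabla^\perp\rho_N|(x)\;\leq\; \frac{S_\kappa(r)}{S_\kappa(R(x))}\sin\beta(y)+\frac{1}{S_\kappa(R(x))}\int_r^{R(x)} S_\kappa(u)\,k(u)\,du.
\]
Setting $\delta(t):=\frac{S_\kappa(r)}{S_\kappa(t)}\sup_{y\in\partial D_r}\sin\beta(y)$ produces a decreasing function with $\delta(t)\to 0$ as $t\to\infty$, since $S_\kappa$ is strictly increasing and unbounded for $\kappa\leq 0$ and $\sin\beta$ attains its supremum on the compact hypersurface $\partial D_r$ (compact by the properness of $\varphi$ from Theorem~\ref{logan}). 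This is precisely the bound claimed in the lemma.

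The main obstacle I anticipate is purely bookkeeping: verifying that the sign conventions in the Riccati inequality (\ref{eqdif4}) are preserved when the flow is restarted at $\partial D_r$, and confirming that $S_\kappa$ is the correct replacement for $h$ under the sharp ambient bound $K_N\leq\kappa$. Both checks are routine adaptations of the calculation already carried out in the proof of Theorem~\ref{logan} and introduce no new analytic content beyond the explicit use of the radial majorant $k$.
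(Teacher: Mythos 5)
The paper does not give its own proof of this lemma: it is stated as a citation to Kasue--Sugahara ("see proof of lemma 4"). Your blind proof therefore cannot be compared against a proof in the paper, but it can be checked on its own merits, and it is correct. Moreover, it is essentially the argument Kasue and Sugahara run, and it is closely parallel to the flow-line computation already carried out in the proof of Theorem~\ref{logan}, so it fits the paper's machinery cleanly.

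The substantive checks all go through. The identification $\sqrt{1-\psi^2}=\sin\beta=\vert\nabla^{\perp}\rho_N\vert$ is forced by the orthogonal decomposition $\nabla\rho_N=\nabla R+\nabla^{\perp}\rho_N$ and $\vert\nabla\rho_N\vert=1$. Replacing $h$ by $S_\kappa$ is exactly right: for $K_N\leq\kappa\leq 0$ the comparison ODE $h''-Gh=0$ with $G=-\kappa$ is $S_\kappa''+\kappa S_\kappa=0$, and the Hessian comparison gives $\hess\rho_N(\omega,\omega)\geq S_\kappa'/S_\kappa$ for $\omega\perp\grad\rho_N$. Restarting the flow at $\partial D_r$ with $r>r_0$ is legitimate because $R$ has no critical points on $M\setminus D_{r_0}$, so $\psi>0$ there, the ODE $\xi_t=\nu/\psi$ has global solutions, $R(\xi(t,y))=r+t$, and every point of $M\setminus D_r$ lies on a unique integral curve launched from $\partial D_r$. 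The Riccati manipulation
\[
\bigl[S_\kappa(r+t)\sqrt{1-\psi^2}\bigr]_t\leq -S_\kappa(r+t)\langle\nu^*,\alpha(\nu,\nu)\rangle\leq S_\kappa(r+t)\,k(r+t)
\]
is correct, and integrating and dividing by $S_\kappa(R(x))$ yields the stated bound with
$\delta(t)=\dfrac{S_\kappa(r)}{S_\kappa(t)}\sup_{y\in\partial D_r}\sin\beta(y)$,
which is decreasing and tends to $0$ since $S_\kappa$ is increasing and unbounded for $\kappa\leq 0$, and the supremum is finite (indeed at most $1$) because $\partial D_r$ is compact by properness of $\varphi$. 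The only thing worth flagging is notational: the $\rho_{\kan}$ appearing in the lemma statement is a typo for $\rho_N$, which you implicitly and correctly read through.
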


\subsection{Tamed surfaces and their topology}

This paper is concerned with tamed surfaces, hence, by Theorem \ref{tamed-theorem},  with surfaces of finite topological type.
Recall that a surface  $M$ is of finite topological type if $M$ is homeomorphic to the interior of a compact surface $\tilde M$ with non-empty boundary. A surface of finite topological type has finitely many ends. Recall also that given a compact subset $D\subset M$ of $M$, an \emph{end} $E$ of $M$ with respect to $D$ is a connected unbounded component of $M\setminus D$

 Observe that if $D_1\subset D_2$ are compact subsets of $M$, then the number of ends with respect to $D_1$ is at most the number of ends with respect to $D_2$ . This monotonicity property allows us to define the number of ends of a surface.

\begin{definition}\label{def-ends}A surface $M$ is said to have \emph{finitely many ends} if there exists $0<k<\infty$, such that, for any compact $D\subset M$, the number of  ends with respect to $D$ is at most $k$. In this case, we denote $\E(M)$ to be the smallest such $k$, and we shall say that $M$ has $\E(M)$ ends.
\end{definition}

Obviously, $\E(M)$ must be an integer and, if a surface has finitely many ends, one readily concludes that there exists $D_0\subset M$ such that, the number of ends with respect to $D_0$ is precisely $\E(M)$.  

For surfaces of finite topological type one can state the following proposition
\begin{proposition}
Suppose that $M$ is a surface of finite topological type, then:
\begin{enumerate}
\item $M$ has finitely many ends, say $\E(M)$ ends.
\item $M$ is homeomorphic to a compact surface $\widetilde M$ with $\E(M)$ points removed, \emph{i.e.},
$$
M\sim \widetilde M\setminus\left\{ p_1,\cdots,p_{\E(M)}\right\}
$$
\item There exists a compact domain $\Omega_0\subset M$, such that $M$ has $\E(M)$ ends with respect $\Omega_0$, and, every of such ends is homeomorphic to $\erre_+\times \mathbb{S}^1$ $($every end with respect to $\Omega_0$ is an annular end$)$.
\end{enumerate}
\end{proposition}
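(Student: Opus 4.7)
The plan is to reduce everything to the collar neighborhood theorem for compact surfaces with boundary and then read off the three statements from the same geometric picture. Throughout, I use the hypothesis that $M$ is homeomorphic to the interior of a compact surface $\tilde M$ with non-empty boundary, and I fix such a homeomorphism $\Phi:M\to \operatorname{Int}(\tilde M)$.

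First, I would invoke the collar neighborhood theorem: since $\tilde M$ is a compact $2$-manifold with boundary, there is an open neighborhood $U$ of $\partial\tilde M$ in $\tilde M$ together with a homeomorphism $U\cong \partial\tilde M\times [0,1)$ that restricts to the identity on $\partial\tilde M\times\{0\}$. Because $\partial\tilde M$ is a compact $1$-manifold, it has only finitely many connected components $C_1,\dots,C_k$, each homeomorphic to $\mathbb{S}^1$. Consequently $U$ splits as a disjoint union of $k$ half-open annuli $C_j\times[0,1)$. Define the compact set
\[
\Omega_0 := \Phi^{-1}\!\left(\tilde M\setminus \left(\partial\tilde M\times [0,\tfrac12)\right)\right),
\]
which is homeomorphic to a compact surface with boundary $\partial\Omega_0$ consisting of $k$ disjoint circles, and such that $M\setminus \Omega_0$ has exactly $k$ connected components, each homeomorphic to $\mathbb{S}^1\times (\tfrac12,1)\cong\mathbb{S}^1\times \erre_+$.

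Next I would prove (1) by a monotonicity-plus-pigeonhole argument. For any compact $D\subset M$, compactness of $\Phi(D)$ inside $\operatorname{Int}(\tilde M)$ together with compactness of $\partial\tilde M$ implies there exists $s\in(0,1)$ so that $\Phi(D)$ is disjoint from $\partial\tilde M\times[0,s)$; hence every connected unbounded component of $M\setminus D$ must contain at least one of the $k$ half-collars produced above, and two distinct ends cannot share the same half-collar (they lie in different connected components of $M\setminus D$). Thus the number of ends with respect to any compact $D$ is at most $k$, proving $M$ has finitely many ends in the sense of Definition \ref{def-ends} with $\E(M)\le k$. Taking $D=\Omega_0$ shows the bound is attained, so $\E(M)=k$. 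This simultaneously yields (3): the set $\Omega_0$ witnesses exactly $\E(M)$ ends, each of which is by construction an annular end $\mathbb{S}^1\times\erre_+$.

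For (2) I would cap each boundary component. Form the closed surface
\[
\widetilde M := \tilde M \cup_{\partial \tilde M}\, (D_1\sqcup\cdots\sqcup D_k),
\]
by gluing a closed $2$-disk $D_j$ to $C_j$ via the identity on $\partial D_j = \mathbb{S}^1\cong C_j$. Let $p_j$ be the center of $D_j$. Then $\widetilde M\setminus\{p_1,\dots,p_k\}$ deformation retracts onto $\tilde M$ in such a way that each punctured disk $D_j\setminus\{p_j\}$ is glued to the corresponding collar $C_j\times[0,1)$ compatibly; composing with $\Phi^{-1}$ gives the required homeomorphism $M\sim \widetilde M\setminus\{p_1,\dots,p_{\E(M)}\}$.

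The only real subtlety — and thus the main point to treat carefully — is the monotonicity/pigeonhole step in (1): one must verify that every unbounded component of $M\setminus D$ meets some half-collar and, conversely, that distinct components cannot merge inside a single half-collar. Both follow from the fact that each $C_j\times[0,1)$ is connected and contained in a single component of $M\setminus D$ once $D$ is pushed inside $\Omega_0$, so once this is spelled out the equality $\E(M)=k$ and all three statements follow uniformly from the collar picture.
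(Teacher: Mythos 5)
The paper states this proposition without proof, treating it as a standard fact about surfaces of finite topological type, so there is no proof in the paper to compare against. Your collar-neighborhood argument is the right one and establishes all three claims: the finite collar $\partial\tilde M\times[0,1)$ gives the compact exhaustion piece $\Omega_0$, the pigeonhole bound via the $k$ connected half-collars proves $\E(M)=k$, and capping off with disks and puncturing gives the homeomorphism to $\widetilde M\setminus\{p_1,\dots,p_{\E(M)}\}$. One small terminological slip: in the proof of (2) you say $\widetilde M\setminus\{p_1,\dots,p_k\}$ ``deformation retracts onto $\tilde M$,'' but a deformation retraction only gives a homotopy equivalence, which is not what you need; what is actually true, and what your construction provides, is that $(D_j\setminus\{p_j\})\cup_{C_j}(C_j\times[0,1))\cong C_j\times(-1,1)$ is homeomorphic to $C_j\times(0,1)$ by a homeomorphism equal to the identity near $C_j\times\{1\}$, and patching these gives a genuine homeomorphism $\widetilde M\setminus\{p_1,\dots,p_k\}\to\operatorname{Int}(\tilde M)\cong M$. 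With that word replaced by ``is homeomorphic to $\operatorname{Int}(\tilde M)$ via a map supported in the collar,'' the proof is complete and would serve perfectly well as the proof the paper omits.
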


\section{Proof of theorem \ref{perimeter-th}}
Theorem \ref{perimeter-th} will be proved in two steps. In the first step we will prove theorem \ref{theo3.1} which is a version of theorem \ref{perimeter-th} but using the linear extrinsic perimeter growth property instead of the quadratic extrinsic area growth property. In the second step we will prove proposition \ref{prop3.2} where the equivalence between quadratic extrinsic area growth and linear extrinsic perimeter growth will be stated.
\begin{theorem}\label{theo3.1}Let $M$ be an immersed complete oriented  surface of $\mathbb{R}^{n}$ with curvature function $K$ and tamed second fundamental form. Suppose that $M$  admits total curvature. Then, $M$ has finite total curvature ($\int_MKdA>-\infty$) if and only if
$M$ has linear perimeter growth, \emph{i.e.}, there exists a constant $\widetilde C_1$ such that,
\begin{equation}\label{upper-theo2}
\text{L}(M\cap S_r(0))\leq \widetilde C_1 r,
\end{equation}
for $r$ large enough, where ${\rm L}(M\cap S_r(0))$ is the perimeter of the intersection of the $r$-geodesic sphere in $\erre^n$ centered at $0\in \erre^n$ with the surface $M$. Furthermore, if (\ref{upper-theo2}) holds, then there exists a constant $\widetilde C_0>0$ such that
\begin{equation}
\text{L}(M\cap S_r(0))\geq \widetilde C_0 r,
\end{equation}
for $r$ large enough.
\end{theorem}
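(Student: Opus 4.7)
\emph{Plan.} The strategy is to apply Gauss--Bonnet to the compact sublevel sets $D_r=\varphi^{-1}(\overline{B_r(0)})$ and, after computing the geodesic curvature of $\partial D_r$ in terms of extrinsic data, to sandwich $\int_{\partial D_r} k_g\,ds$ between multiples of $L(r)/r$, thereby linking $L(r)$ with $\int_{D_r}K\,dA$ through the Gauss--Bonnet identity. By Theorem \ref{tamed-theorem} applied to $N=\erre^n$, the immersion $\varphi$ is proper, $M$ has finite topology, and there is an $r_0$ such that $R=\rho_N\circ\varphi$ has no critical points on $M\setminus D_{r_0}$. Consequently, for $r>r_0$, $\partial D_r$ is a smooth $1$-submanifold, $\chi(D_r)=\chi(M)$, and Gauss--Bonnet reads
\[
\int_{D_r} K\,dA+\int_{\partial D_r}k_g\,ds=2\pi\chi(M).
\]

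To estimate $k_g$ pointwise I would use the composition rule $\hess_M R(X,X)=\hess_N\rho_N(X,X)+\langle\nabla\rho_N,\alpha(X,X)\rangle$. In $\erre^n$ one has $\hess_N\rho_N=\rho_N^{-1}(g-d\rho_N\otimes d\rho_N)$; for a unit vector $e$ tangent to $\partial D_r$, the relations $e\perp\nabla R$ and $\nabla^\perp\rho_N\perp TM$ force $e\perp\nabla\rho_N$, so
\[
\hess_M R(e,e)=\frac{1}{r}+\langle\nabla\rho_N,\alpha(e,e)\rangle.
\]
Fixing $c\in(a(M),1)$, properness and tamedness together with $R\leq\rho_M$ give $\|\alpha\|<c/r$ on $\partial D_r$ for $r$ large, hence $\hess_M R(e,e)\in[(1-c)/r,(1+c)/r]$. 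Lemma~\ref{kasue} with $\kappa=0$, $S_\kappa(t)=t$, $k(s)=c/s$ yields $|\nabla^\perp\rho_N|\leq c+o(1)$, so $|\nabla R|^2\geq 1-c^2-o(1)$. Since $R$ is strictly convex outside a compact, $k_g=\hess_M R(e,e)/|\nabla R|>0$, and integrating along $\partial D_r$ gives the sandwich
\[
(1-c)\,\frac{L(r)}{r}\;\leq\;\int_{\partial D_r}k_g\,ds\;\leq\;\frac{(1+c)\,L(r)}{r\sqrt{1-c^2-o(1)}}.
\]

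From here both implications are immediate. If $\int_M K>-\infty$, Gauss--Bonnet forces $\int_{\partial D_r}k_g\,ds$ to converge to the finite number $\ell:=2\pi\chi(M)-\int_M K\,dA$; the left sandwich bounds $L(r)/r$ and gives $L(r)\leq\widetilde C_1 r$. Conversely, linear perimeter growth together with the right sandwich bounds $\int_{\partial D_r}k_g\,ds$, and Gauss--Bonnet then bounds $\int_{D_r}K\,dA$ from below; since $M$ admits total curvature, $\int_M K>-\infty$.

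Via the right sandwich the linear lower bound $L(r)\geq \widetilde C_0 r$ reduces to showing $\ell>0$, which is the main obstacle, since Cohn--Vossen only gives $\ell\geq 0$. My plan to handle this is to decompose $F(r)=\int_{\partial D_r}k_g\,ds=\sum_k F_k(r)$ over the $\E(M)$ annular ends supplied by Theorem~\ref{tamed-theorem}(4); Gauss--Bonnet on each annulus $V_k\cap(D_r\setminus D_{r_0})$ (of Euler characteristic zero) shows $F_k(r)\to\ell_k\geq 0$, and one must establish $\ell_k>0$ for every $k$. Equivalently, I would exploit the stronger convexity $\hess_M(R^2/2)\geq (1-c)g$ outside a compact: the divergence theorem gives $r\int_{\partial D_r}|\nabla R|\,ds\geq 2(1-c)A(D_r)+O(1)$, whence $L(r)\geq 2(1-c)A(D_r)/r+O(1/r)$, and coupling this with a quadratic lower bound $A(D_r)\geq C_0 r^2$ (obtainable by a monotonicity-type argument using the strict convexity of $R^2/2$) yields the linear lower bound for $L(r)$.
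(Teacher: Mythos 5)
Your core line of attack matches the paper's: Gauss--Bonnet applied to sublevel sets, the geodesic curvature of $\partial D_r$ expressed via $\hess_M R=\hess_N\rho_N+\langle\nabla\rho_N,\alpha(\cdot,\cdot)\rangle$, Kasue's lemma to control $|\nabla^\perp\rho_N|$, and the resulting two-sided sandwich for $\int_{\partial D_r}k_g$. The only cosmetic difference is that you apply Gauss--Bonnet to $D_r$ directly (using $\chi(D_r)=\chi(M)$), whereas the paper applies it to the annulus $A_{r_0,t}$ with $\chi(A_{r_0,t})=0$ and then lets both endpoints vary; both work and yield the equivalence ``finite total curvature $\Longleftrightarrow$ linear perimeter growth.'' Up to that point your proposal is correct and essentially the paper's proof.

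The gap is in the final claim, the positive lower bound $L(r)\geq\widetilde C_0\,r$. You correctly identify that this amounts to $\ell:=2\pi\chi(M)-\int_MK\,dA>0$ and that Cohn--Vossen only gives $\ell\geq 0$. But the fix you propose does not close the loop. The divergence-theorem estimate you quote, combined with the coarea inequality $A'(r)\geq\int_{\partial D_r}|\nabla R|\,dL$, gives
\begin{equation*}
r\,A'(r)\ \geq\ 2(1-c)\,A(D_r)\ +\ O(1),
\end{equation*}
and integrating this differential inequality yields only $A(D_r)\gtrsim r^{2(1-c)}$, which is strictly sub-quadratic for every admissible $c>a(M)\geq 0$. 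There is no ``monotonicity-type argument using the strict convexity of $R^2/2$'' that upgrades this to $A(D_r)\geq C_0 r^2$; the exponent $2(1-c)$ is exactly what the convexity constant $(1-c)$ buys, and trying to feed the resulting perimeter bound back into the coarea formula is circular. So as written your lower-bound step is unjustified. For comparison, the paper's own treatment of $\widetilde C_0$ fixes a threshold $t_1$, uses finiteness of the total curvature to make $\sup_{t_2>t_1}\left|\int_{A_{t_1,t_2}}K\,dA\right|$ small, and reads off $\widetilde C_0$ from the lower sandwich at $t_1$; the positivity of $\widetilde C_0$ is likewise not spelled out there, and in fact it too depends on $\liminf_{t\to\infty}L(\partial D_t)/t>0$. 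A clean way to establish this strictly positive liminf (hence $\ell>0$ and $\widetilde C_0>0$) that avoids the circularity is to invoke Fenchel's theorem: each connected component $\gamma$ of $\varphi(\partial D_t)$ is a closed curve in $\erre^n$, so $\int_\gamma\kappa_{\erre^n}\,ds\geq 2\pi$; since $\kappa_{\erre^n}\leq k_g+\Vert\alpha\Vert\leq k_g+c/t$ on $\partial D_t$, and $\int k_g$ is controlled by the sandwich in terms of $L/t$, this gives directly $L(\partial D_t)/t\geq 2\pi/\bigl(\tfrac{1+c^2}{\sqrt{1-c^2}}+c+o(1)\bigr)>0$. I'd recommend replacing the monotonicity claim with an argument of this type.
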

\begin{proof}
We are going to apply the Gauss-Bonnet theorem to the extrinsic annulus $A_{r_0,t}:=D_t\setminus D_{r_0}$ for $t>r_0$. Taking into account that since the extrinsic distance function $R=\rho_{\erre^n}\circ \varphi$ has no critical points on $M\setminus D_{r_0}$, then ${A_{r_0,t}}$ is a finite union of annuli, and we obtain
\begin{equation}
\int_{A_{r_0,t}}K dA+\int_{\partial A_{r_0,t}}k_g dL=2\pi \chi({A_{r_0,t}})=0.
\end{equation} 
where $K$, $k_g$ and $\chi({A_{r_0,t}})$ denote the Gaussian curvature, the geodesic curvature and the Euler characteristic respectively.
Observe moreover that $\partial A_{r_0,t}$ is the union of two level sets
$$
\partial A_{r_0,t}=\partial D_t\cup \partial D_{r_0}.
$$
Hence,
\begin{equation}\label{Gauss-Bonn}
\int_{A_{r_0,t}}K dA=\int_{\partial D_{r_0}}k_g dL-\int_{\partial D_{t}}k_g dL
\end{equation} But for any $s$, the geodesic curvature $k_g^s$ of the extrinsic spheres $\partial D_s$ is given by
\begin{equation}
\begin{aligned}
k_g^s=&-\langle \nabla_e e,\frac{\nabla R}{\vert \nabla R\vert}\rangle=\frac{1}{\vert \nabla R\vert}\Hess_{M}R(e,e)\\
=&\frac{1}{\vert \nabla R\vert}\left(\frac{1}{s}+\langle \nabla^\perp \Rho,\alpha(e,e)\rangle\right),
\end{aligned}
\end{equation}
where $e$ is tangent to $\partial D_s$. Then,
\begin{equation}
\frac{1}{s}\frac{1}{\vert \nabla R\vert}\left(1-s\vert \nabla^\perp \Rho\vert\cdot \Vert\alpha\Vert\right)\leq k_g^s\leq \frac{1}{s}\frac{1}{\vert \nabla R\vert}\left(1+s\vert \nabla^\perp \Rho\vert \cdot\Vert\alpha\Vert\right).
\end{equation}
Since $a(M)<1$, then for any $c\in (a(M),1)$ there exists $t_c$ such that
\begin{equation}
R(x)\Vert \alpha \Vert(x)<c,
\end{equation}
for all $R(x)=t>t_c$. Using lemma \ref{kasue}, we obtain
\begin{equation}\label{ine3.6} 
\vert \nabla^\perp \Rho\vert\leq \delta(t)+\frac{c(t-t_c)}{t}\leq \delta(t)+c,
\end{equation}
at any point $x\in M$ with $R(x)=t$ and $t>t_c>r_0$. In order to simplify the notation let us denote by 
\begin{equation}\label{deflam}
\Lambda_c(t):=\delta(t)+c.
\end{equation}
Therefore,

\begin{equation}\begin{array}{lllll}
\displaystyle\frac{1}{t}\displaystyle\frac{1}{\vert \nabla R\vert}\left(1-c\vert \nabla^\perp \Rho\vert\right)&\leq & k_g^t&\leq& \displaystyle\frac{1}{t}\displaystyle\frac{1}{\vert \nabla R\vert}\left(1+c\vert \nabla^\perp \Rho\vert \right) \nonumber 
\end{array}
\end{equation}that can be simplified to
\begin{equation}\begin{array}{lllll}
\displaystyle\frac{1}{t}\displaystyle\frac{1-c\vert \nabla^\perp \Rho\vert}{\left(1-\vert \nabla^\perp \Rho\vert^2\right)^{\frac{1}{2}}}&\leq & k_g^t&\leq& \displaystyle\frac{1}{t}\displaystyle\frac{1+c\vert \nabla^\perp \Rho\vert}{\left(1-\vert \nabla^\perp \Rho\vert^2\right)^{\frac{1}{2}}}\nonumber
\end{array}
\end{equation}and can be rewritten as
\begin{equation}\label{eq3.8}
\begin{array}{lllll}
\displaystyle\frac{1-c\Lambda_c(t)}{t}&\leq & k_g^t & \leq &\displaystyle \frac{1}{t}\displaystyle\frac{1+c\Lambda_c(t)}{\left(1-\Lambda^2_c(t)\right)^{\frac{1}{2}}}
\end{array}
\end{equation}

Applying the above inequalities to the extrinsic annulus $A_{t_1,t_2}$ and using Gauss-Bonnet formula, as in the inequality (\ref{Gauss-Bonn}) we have,
\begin{equation}\label{eq3.9}\begin{array}{lll}\displaystyle\frac{{\rm L}(\partial D_{t_1})}{t_1}\displaystyle\left(1-c\Lambda_c(t_1)\right)-\displaystyle\frac{{\rm L}(\partial D_{t_2})}{t_2}\frac{1+c\Lambda_c(t_2)}{\left(1-\Lambda^2_c(t_2)\right)^{\frac{1}{2}}}&\leq & \int_{A_{t_1,t_2}}K dA
\end{array}
\end{equation}
and 
\begin{equation}\label{eq3.10}\begin{array}{lll}
\int_{A_{t_1,t_2}}K dA&\leq& \displaystyle\frac{{\rm L}(\partial D_{t_1})}{t_1}\displaystyle\frac{1+c\Lambda_c(t_1)}{\left(1-\Lambda^2_c(t_1)\right)^{\frac{1}{2}}}-\displaystyle\frac{{\rm L}(\partial D_{t_2})}{t_2}\displaystyle\left(1-c\Lambda_c(t_2)\right)
\end{array}
\end{equation}

If we suppose that $M$ has  linear extrinsic perimeter growth, from inequality (\ref{eq3.9}) and the monotonicity of $\Lambda_c$
\begin{equation}\begin{array}{lll}-\displaystyle C_1\frac{1+c\Lambda_c(t_1)}{\left(1-\Lambda^2_c(t_1)\right)^{\frac{1}{2}}}&\leq & \int_{A_{t_1,t_2}}K dA.
\end{array}
\end{equation}Letting $t_2\to\infty$, we get the desired $\int_MKdA>-\infty$ because the integral of the curvature is finite on each end of the surface.

On the other hand from inequalities (\ref{eq3.9}), (\ref{eq3.10}) and the monotonicity of $\Lambda_c$,
\begin{equation}
\begin{aligned}
\frac{{\rm L}(\partial D_{t_2})}{t_2}\leq &\frac{1}{1-c\Lambda_c(t_1)}\left[\frac{{\rm L}(\partial D_{t_1})}{t_1}\displaystyle\frac{1+c\Lambda_c(t_1)}{\left(1-\Lambda^2_c(t_1)\right)^{\frac{1}{2}}}-\int_{A_{t_1,t_2}}K dA\right],\\
\frac{{\rm L}(\partial D_{t_2})}{t_2}\geq &\frac{(1-\Lambda_c^2(t_1))^\frac{1}{2}}{1+c\Lambda_c(t_1)}\left[\frac{{\rm L}(\partial D_{t_1})}{t_1}(1-c\Lambda_c(t_1))-\int_{A_{t_1,t_2}}K dA\right].
\end{aligned}
\end{equation}

If we assume that $M$ admits finite total curvature, for any $\epsilon>0$ there therefore exists $t_1$ large enough such that
\begin{equation}
 \left\vert\int_{A_{t_1,t_2}}K dA\right\vert<\epsilon.
\end{equation}
Then
\begin{equation}
\begin{aligned}
\frac{{\rm L}(\partial D_{t_2})}{t_2}\leq &\frac{1}{1-c\Lambda_c(t_1)}\left[\frac{{\rm L}(\partial D_{t_1})}{t_1}\displaystyle\frac{1+c\Lambda_c(t_1)}{\left(1-\Lambda^2_c(t_1)\right)^{\frac{1}{2}}}+\epsilon\right]:=\widetilde C_1,\\
\frac{{\rm L}(\partial D_{t_2})}{t_2}\geq &\frac{(1-\Lambda_c^2(t_1))^\frac{1}{2}}{1+c\Lambda_c(t_1)}\left[\frac{{\rm L}(\partial D_{t_1})}{t_1}(1-c\Lambda_c(t_1))-\epsilon\right]:=\widetilde C_0.
\end{aligned}
\end{equation}
And this finishes the proof of the theorem because for $t$ large enough
\begin{equation}
\begin{aligned}
\left\vert\int_MKdA\right\vert<\infty \quad &\Longleftrightarrow \quad \exists C_1\,\,:\,\, L(\partial D_t)\leq C_1 t,\\
\left\vert\int_MKdA\right\vert<\infty \quad &\Longrightarrow \quad \exists C_0\,\,:\,\, L(\partial D_t)\geq C_0 t.
\end{aligned}
\end{equation}\end{proof}

\begin{proposition}\label{prop3.2}
Let $M$ be an immersed complete oriented  surface of $\mathbb{R}^{n}$ with tamed second fundamental form, then $M$ has quadratic extrinsic area growth, if and only if, $M$ has linear extrinsic perimeter growth. Namely,
$$
\text{L}(M\cap S_r(0))\leq \widetilde  C_1 r\Longleftrightarrow\text{A}(M\cap B_r(0))\leq C_1r^2,
$$ 
for $r$ large enough. Furthermore, 
$$
\text{L}(M\cap S_r(0))\geq \widetilde C_0 r\Longrightarrow\text{A}(M\cap B_r(0))\geq C_0r^2,
$$ 
\end{proposition}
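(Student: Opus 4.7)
The key tool in my plan is the coarea formula for the extrinsic distance function $R=\Rho\circ\varphi$. By Theorem \ref{tamed-theorem}, $R$ has no critical points on $M\setminus D_{r_0}$ once $r_0$ is large enough, so for every $t>r_0$,
\begin{equation}
\text{A}(D_t)-\text{A}(D_{r_0})=\int_{r_0}^{t}\int_{\partial D_s}\frac{1}{\vert\nabla R\vert}\,dL\,ds.
\end{equation}
The tamed hypothesis combined with Lemma \ref{kasue} gives $\vert\nabla^\perp\Rho\vert\leq\Lambda_c(R)$ with $\Lambda_c$ as in (\ref{deflam}), which decreases monotonically to $c<1$. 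Since $\vert\nabla R\vert^{2}=1-\vert\nabla^\perp\Rho\vert^{2}$, this yields the pointwise sandwich
\begin{equation}
\text{L}(\partial D_s)\;\leq\;\int_{\partial D_s}\frac{1}{\vert\nabla R\vert}\,dL\;\leq\;\frac{\text{L}(\partial D_s)}{\sqrt{1-\Lambda_c^2(s)}}.
\end{equation}

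The two implications whose hypothesis controls $\text{L}(\partial D_s)$ then follow immediately. Plugging $\text{L}(\partial D_s)\leq\widetilde C_1 s$ into the upper half of the sandwich and integrating from $r_0$ to $t$ (using monotonicity of $\Lambda_c$) produces $\text{A}(D_t)\leq C_1 t^2$ for $t$ large; plugging $\text{L}(\partial D_s)\geq\widetilde C_0 s$ into the lower half and integrating gives $\text{A}(D_t)\geq C_0 t^2$.

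The non-trivial direction is $\text{A}(D_r)\leq C_1 r^2\Rightarrow\text{L}(\partial D_r)\leq\widetilde C_1 r$, and my plan is first to show that $r\mapsto\text{L}(\partial D_r)$ is non-decreasing for $r$ large. The first variation formula for arc length applied to the level-set family reads $\frac{d}{dr}\text{L}(\partial D_r)=\int_{\partial D_r}\frac{k_g}{\vert\nabla R\vert}\,dL$, and the formula for $k_g^s$ computed in the proof of Theorem \ref{theo3.1}, together with the tamed estimate $\Vert\alpha\Vert\leq c/R$, yields the pointwise lower bound $k_g\geq\frac{1-c\Lambda_c(s)}{s\vert\nabla R\vert}$, which is strictly positive once $c\Lambda_c(s)<1$ (automatic for $s$ large since $c\Lambda_c\to c^2<1$). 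With monotonicity in hand and the quadratic area hypothesis, for $t$ large
\begin{equation}
t\cdot\text{L}(\partial D_t)\leq\int_{t}^{2t}\text{L}(\partial D_s)\,ds\leq\int_{t}^{2t}\int_{\partial D_s}\frac{1}{\vert\nabla R\vert}\,dL\,ds=\text{A}(D_{2t})-\text{A}(D_t)\leq 4C_1 t^2,
\end{equation}
so one may take $\widetilde C_1=4C_1$. The main obstacle is exactly this monotonicity step: without it the coarea formula gives only an averaged bound on $\text{L}(\partial D_s)$, and the upgrade to a pointwise bound hinges on the strict pointwise positivity of $k_g$ on the extrinsic spheres—precisely where the strict inequality $a(M)<1$ of the tamed hypothesis is essential.
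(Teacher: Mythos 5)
Your proof is correct, and for the hard direction it takes a genuinely different route from the paper. The two easy directions (perimeter growth controlling area growth, both upper and lower) are handled by the coarea formula exactly as in the paper. For the reverse implication $\text{A}(D_r)\leq C_1 r^2\Rightarrow\text{L}(\partial D_r)\leq\widetilde C_1 r$, the paper does \emph{not} pass through monotonicity of $r\mapsto\text{L}(\partial D_r)$; instead it computes $\triangle_M R^2 = 4R\left(\tfrac{1}{R}+\langle\nabla\rho_{\erre^n},H\rangle\right)\leq 4(1+R\Vert\alpha\Vert)$ and applies the divergence theorem on $D_t$, which gives $2t\int_{\partial D_t}\vert\nabla R\vert\,dL=\int_{D_t}\triangle_M R^2\,dA\leq A_1+4(1+c)\,\text{A}(D_t)$ directly, and the perimeter bound follows after dividing by $2t\sqrt{1-\Lambda_c^2(t_1)}$. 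Your argument instead establishes that $\text{L}(\partial D_r)$ is eventually non-decreasing via the first variation of arclength $\tfrac{d}{dr}\text{L}(\partial D_r)=\int_{\partial D_r}\tfrac{k_g}{\vert\nabla R\vert}\,dL$ and the pointwise positivity $k_g\geq\tfrac{1-c\Lambda_c(s)}{s\vert\nabla R\vert}>0$ (valid once $c\Lambda_c(s)<1$, which holds for $s$ large since $\Lambda_c\to c<1$), then upgrades the averaged coarea bound to a pointwise one via the doubling inequality $t\,\text{L}(\partial D_t)\leq\int_t^{2t}\text{L}(\partial D_s)\,ds\leq\text{A}(D_{2t})\leq 4C_1t^2$. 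Both arguments are sound and rely on the same two inputs (coarea and the tamed estimate via Lemma \ref{kasue}). The paper's divergence-theorem route is slightly shorter and generalizes verbatim to any dimension $m$, since it only uses $\triangle_M R^2\leq 2m(1+R\Vert\alpha\Vert)$; your route is more geometric and makes visible the convexity of the extrinsic level sets that the tamed hypothesis enforces, but it is tied to the surface case through the geodesic curvature $k_g$. One small precision worth adding if you write this up: the first variation formula requires $\partial D_r$ to be a smooth (possibly disconnected) closed curve, which holds for $r>r_0$ by Theorem \ref{tamed-theorem}, and the doubling step must start at a $t$ past the radius where both $k_g>0$ and the quadratic area bound are in force.
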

\begin{proof}
Denote by $D_t(o)=M\cap B_t(o)$ the extrinsic ball centered at $o\in M$. Let us observe that by using coarea formula (see for instance \cite{Sakai}) for the extrinsic distance function $R=\rho_{\erre^n}\circ \varphi$ on any extrinsic ball $D_t$ with $t>t_1>r_o$, 
\begin{equation}\label{coarea1}
\begin{aligned}
{\rm A}(D_{t})=&{\rm A}(D_{t_1})+\int_{t_1}^{t}\int_{\partial D_s(o)}\frac{1}{\vert \nabla R\vert}dLds\\
\end{aligned}
\end{equation}
Thus, for any $c\in (a(M),1)$ and $t_1$ large enough, taking into account the monotonocity of the function $\Lambda_c$,
\begin{equation}
\begin{aligned}
{\rm A}(D_{t})=&{\rm A}(D_{t_1})+\int_{t_1}^{t}\int_{\partial D_s(o)}\frac{1}{\sqrt{1-\vert \nabla^\perp \Rho \vert^2}}dLds\\
\leq& {\rm A}(D_{t_1})+\int_{t_1}^{t}\int_{\partial D_s(o)}\frac{1}{\sqrt{1-\Lambda_c^2(s)}}dLds\\
=&{\rm A}(D_{t_1})+\int_{t_1}^{t}\frac{1}{\sqrt{1-\Lambda_c^2(s)}}{\rm L}(\partial D_s(o))ds\\
\leq &{\rm A}(D_{t_1})+\frac{1}{\sqrt{1-\Lambda_c^2(t_1)}}\int_{t_1}^{t}{\rm L}(\partial D_s(o))ds.
\end{aligned}
\end{equation}
Hence, if we suppose that $M$ has linear extrinsic perimeter growth,
\begin{equation}
\begin{aligned}
{\rm A}(D_t)\leq & {\rm A}(D_{t_1})+\frac{1}{\sqrt{1-\Lambda_c^2(t_1)}}\frac{\widetilde C_1}{2}(t^2-t_1^2)\\
=&\left[\frac{{\rm A}(D_{t_1})}{t^2}+\frac{1}{\sqrt{1-\Lambda_c^2(t_1)}}\frac{\widetilde C_1}{2}\left(1-\left(\frac{t_1}{t}\right)^2\right)\right]t^2\\
\leq & \left[\frac{{\rm A}(D_{t_1})}{{t_1}^2}+\frac{1}{\sqrt{1-\Lambda_c^2(t_1)}}\frac{\widetilde C_1}{2}\right]t^2
\end{aligned}
\end{equation}
Denoting $C_1:=\frac{{\rm A}(D_{t_1})}{{t_1}^2}+\frac{1}{\sqrt{1-\Lambda_c^2(t_1)}}\frac{\widetilde C_1}{2}$ we conclude that $M$ has quadratic extrinsic area growth. 

In order to prove the reverse implication let us consider now  the Laplacian of the extrinsic distance function $R=\rho_{\erre^n}\circ \varphi$,
\begin{equation}
\begin{aligned}
\triangle_MR^2=&4R\left(\frac{1}{R}+\langle \nabla \rho_{\erre^n},H\rangle\right)\leq 4\left(1+R\vert H\vert\right)\leq 4\left(1+R\Vert \alpha \Vert\right) 
\end{aligned}
\end{equation}
Applying the divergence theorem in an extrinsic ball $D_t$ with $t$ large enough and $c\in (a(M),1)$ we have 
\begin{equation}
\begin{aligned}
2t\int_{\partial D_t}\vert \nabla R\vert dL=&\int_{D_t}\triangle_MR^2\, dA=\int_{D_{t_1}}\triangle_MR^2\, dA +\int_{A_{t_1,t}}\triangle_MR^2\, dA\\
\leq&  \int_{D_{t_1}}\triangle_MR^2\, dA +4(1+c){\rm A}(A_{t_1,t})
\end{aligned}
\end{equation}
Then, denoting $A_1:=\int_{D_{t_1}}\triangle_MR^2\, dA$, and assuming that $M$ has quadratic extrinsic area growth
\begin{equation}
\begin{aligned}
2t\sqrt{1-\Lambda_c^2(t)}{\rm L}(\partial D_t)\leq& A_1+4(1+c){\rm A}(A_{t_1,t})\\
\leq & A_1+4(1+c){\rm A}(D_{t})\leq A_1+4(1+c)C_1t^2\\
\leq & \left[\frac{A_1}{t_1^2}+4(1+c)C_1\right]t^2
\end{aligned}
\end{equation}
Letting $\widetilde C_1:=\frac{\frac{A_1}{t_1^2}+4(1+c)C_1}{2\sqrt{1-\Lambda_c^2(t_1)}}$, we therefore obtain
\begin{equation}
{\rm L}(\partial D_t)\leq \widetilde C_1 t.
\end{equation}
Observe finally that from inequality (\ref{coarea1}) for any $\delta\in (0,1)$ and any $t\geq\frac{1}{1-\delta}t_1$, under the hypothesis of a lower bound for the extrinsic perimeter growth 
\begin{equation}
{\rm A}(D_t)\geq A_1+\int_{t_1}^t{\rm L}(\partial D_s)ds\geq \frac{\widetilde C_0}{2}\left(t^2-t_1^2\right)\geq \frac{\widetilde C_0}{2}t\left(t-t_1\right)\geq\frac{\delta\widetilde C_0}{2}t^2.
\end{equation}
Letting $C_0$ be $\frac{\delta\widetilde C_0}{2}$, the proposition follows.
\end{proof}

\section{Proof of theorem \ref{theo2} and corollaries \ref{nonnegative} and \ref{cor1.4}}
\begin{proof}Given a surface of finite topological type which admitting total curvature we can make use of  \cite[theorem A]{Shiohama}, for any fixed point $o\in M$
\begin{equation}
\lim_{t\to\infty}\frac{{\rm A}(t)}{t^2/2}=2\pi\chi(M)-\int_MKdA
\end{equation}
where ${\rm A}(t)$ is the area of the geodesic ball of radius $t$ centered at $o\in M$. 
Denote by $D_t(o)=M\cap B_t(o)$ the extrinsic ball centered at $o\in M$. Therefore
\begin{equation}
{\rm A}(t)\leq {\rm A}(D_t(o)).
\end{equation} Hence,
\begin{equation}
\begin{aligned}
2\pi\chi(M)-\int_MKdA=&\lim_{t\to\infty}\frac{A(t)}{t^2/2}\\
\leq & \limsup_{t\to\infty}\frac{{\rm A}(D_t(o))}{t^2/2}\\
\leq &  C_1.
\end{aligned}
\end{equation}
 The upper bound for the inequality of the theorem therefore follows. On the other hand, using the Gauss-Bonnet theorem for an extrinsic ball of radius $t$ large enough, and inequality (\ref{eq3.8}) we obtain

\begin{equation}\begin{aligned}
2\pi\chi(M)-\int_{D_t}KdA=&\int_{\partial D_t}k_gdL\geq \left(1-c\Lambda_c(t)\right)\frac{{\rm L}(\partial D_t)}{t} \\
\geq &\left(1-c\Lambda_c(t)\right)C_0.\end{aligned}
\end{equation}
Letting $t$ tend to infinity and after letting $c$ tend to $a(M)$ the theorem follows.

In order to prove corollary \ref{nonnegative}, observe that if we assume that $M$ is flat, by using the above inequality
\begin{equation}
\chi(M)=2-2g(M)-\E(M)>0
\end{equation}
where $g(M)$ is the genus of $M$ and $\E(M)$ is the number of ends of $M$. Since $\E(M)\geq 1$, the only option is $\E(M)=1$ and $g(M)=0$. The surface $M$ is therefore homeomorphic to a sphere with one point removed. Since the surface is simply connected, metrically complete and with  zero curvature, the surface is isometric to $\erre^2$ with the canonical flat metric (see \cite[theorem 11.12]{John} for instance).  

Moreover, if $M$ has strongly tamed second fundamental form, then $M$ has  tamed fundamental form as well. Hence by applying co-area formula and taking into account that $\Vert \alpha(x)\Vert\leq \frac{c}{t^{1+\epsilon}}$ for any $c\in (a(M),1)$ and $t=R(x)$ large enough,
\begin{equation}
\begin{aligned}
\int_M\Vert \alpha\Vert^2 dA=&\int_{D_{t_1}}\Vert \alpha\Vert^2 dA+\int_{A_{t_1,t}}\Vert \alpha\Vert^2 dA\\
\leq & \int_{D_{t_1}}\Vert \alpha\Vert^2 dA+\int_{t_1}^t\int_{\partial D_s}\frac{\Vert \alpha\Vert^2}{\vert \nabla R\vert} dLds\\
\leq & \int_{D_{t_1}}\Vert \alpha\Vert^2 dA+\int_{t_1}^t\frac{c^2}{s^{2+2\epsilon}\sqrt{1-\Lambda_c^2(s)}}L(\partial D_s)ds\\
\leq & \int_{D_{t_1}}\Vert \alpha\Vert^2 dA+\frac{c^2\, \widetilde C_1}{\sqrt{1-\Lambda_c^2(t_1)}}\int_{t_1}^t\frac{1}{s^{1+2\epsilon}}ds<\infty.
\end{aligned}
\end{equation}
By using now theorem 2 of \cite{W}, $\int_M KdA$ is an integral multiple of $2\pi$, and using the lower bounds given  in the inequality of  theorem \ref{theo2} we conclude that $\int_M KdA=0$ because $\chi(M)=1$. Since $M$ is a complete and flat surface with  tamed second fundamental form, $M$ is therefore isometric to $\erre^2$ and this finishes the proof of corollary \ref{cor1.4}
\end{proof}
\section{ Proof of Theorem \ref{tone-theo}}

 The first ingredient for the proof of Theorem
\ref{tone-theo} is  Barta's Theorem \cite{barta}.

\begin{theorem}[Barta]Let $\Omega$ be a bounded
 open set with piecewise  smooth
boundary  in  a Riemannian manifold.  Let $f \in C^2(\Omega)\cap C^0(\bar \Omega)$ with
$f|\Omega > 0$ and $f|\partial \Omega =0$. Then the first Dirichlet
eigenvalue $\lambda_{1}(\Omega)$ has the following bounds:
\begin{equation}\label{eqBarta}
\begin{array}{ccl}
\displaystyle\sup_{\Omega} (- \frac {\Delta f}{f}) \geq \lambda_1(\Omega)
\geq  \displaystyle\inf_{\Omega}(-\frac{\Delta f}{f})\\
\end{array}
\end{equation}
With equality in \eqref{eqBarta} if and only in $f$ is a positive first eigenfunction
of $\Omega$.\end{theorem}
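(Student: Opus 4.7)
The plan is to prove both inequalities in one stroke via a short integration-by-parts argument that pairs the trial function $f$ with a first Dirichlet eigenfunction $u_1$ of $\Omega$. Recall that one may take $u_1 \in C^{2}(\Omega)\cap C^{0}(\bar\Omega)$ with $u_1>0$ on $\Omega$, $u_1=0$ on $\partial\Omega$, and $-\Delta u_1=\lambda_1(\Omega)\,u_1$. The crucial observation is that \emph{both} $f$ and $u_1$ vanish on $\partial\Omega$, so the boundary terms in Green's second identity cancel and one obtains the symmetric identity
$$\int_\Omega u_1(-\Delta f)\,dV \;=\; \int_\Omega \langle \nabla u_1,\nabla f\rangle\,dV \;=\; \int_\Omega f(-\Delta u_1)\,dV \;=\; \lambda_1(\Omega)\int_\Omega f u_1\,dV.$$

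Next I would rewrite the leftmost integrand as $\bigl(-\Delta f/f\bigr)\cdot f u_1$ against the strictly positive weight $f u_1$ on $\Omega$. Sandwiching the factor $-\Delta f/f$ by its infimum and supremum and integrating yields
$$\inf_\Omega\bigl(-\tfrac{\Delta f}{f}\bigr)\int_\Omega f u_1\,dV \;\leq\; \lambda_1(\Omega)\int_\Omega f u_1\,dV \;\leq\; \sup_\Omega\bigl(-\tfrac{\Delta f}{f}\bigr)\int_\Omega f u_1\,dV,$$
and dividing through by the positive quantity $\int_\Omega f u_1\,dV$ delivers exactly the two inequalities of the theorem. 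Note that, in contrast to the more common proof via studying the ratio $u_1/f$ and localizing its maxima and minima, this argument avoids any appeal to the Hopf boundary point lemma.

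For the equality clause, suppose $\lambda_1(\Omega)=\inf_\Omega(-\Delta f/f)$; then the integrand $\bigl(-\Delta f/f-\lambda_1(\Omega)\bigr) f u_1$ is continuous, nonnegative, and integrates to zero over $\Omega$, so it vanishes identically. Since $f u_1 > 0$ on $\Omega$, this forces $-\Delta f=\lambda_1(\Omega)\,f$ in $\Omega$, so that $f$ is itself a positive first Dirichlet eigenfunction; the case of equality in the upper bound is entirely symmetric. The only technical point that needs to be checked is the legitimacy of the integration by parts on a merely piecewise-smooth domain with $f$ only $C^{2}$ in the interior, and I would handle this by exhausting $\Omega$ by subdomains with smooth boundary and passing to the limit, using $f\in C^{0}(\bar\Omega)$ and the elliptic regularity of $u_1$. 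I do not expect any genuine obstacle here — the heart of the argument is simply the weighted mean-value estimate above.
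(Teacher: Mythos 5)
The paper itself does not prove Barta's theorem; it simply states it and cites Barta (1937), so there is no ``paper's proof'' to compare against. Your argument is therefore a genuine contribution rather than a reproduction, and the route you take --- pairing $f$ against the first eigenfunction $u_1$ via Green's identity and reading off both bounds from the resulting weighted mean-value identity --- is indeed a clean alternative to the classical proof. The classical argument (Barta's, and the one found in standard references such as Chavel or Protter--Weinberger) is pointwise: one considers the ratio $u_1/f$ (or $f/u_1$ to have it bounded, via Hopf's lemma applied to $u_1$), locates an interior extremum, and uses the second-derivative test on that ratio together with the eigenvalue equation. That argument never needs to integrate anything, which is precisely why it goes through under the stated hypothesis $f\in C^2(\Omega)\cap C^0(\bar\Omega)$.

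The gap in your proposal is exactly the step you flag as ``the only technical point'' and dismiss as harmless: the boundary term in Green's identity does \emph{not} automatically vanish under $f\in C^2(\Omega)\cap C^0(\bar\Omega)$. Nothing in the hypotheses controls $\nabla f$ near $\partial\Omega$. A one-dimensional example makes this concrete: on $\Omega=(0,1)$ take $f(x)=2x+x\sin(1/x)$ near $0$ (extended suitably). Then $f\in C^2(0,1)\cap C^0[0,1]$, $f>0$, $f(0)=0$, $u_1(x)=\sin(\pi x)$, and the boundary contribution $u_1(\epsilon)f'(\epsilon)-f(\epsilon)u_1'(\epsilon)$ oscillates like $-\pi\cos(1/\epsilon)$ and has no limit as $\epsilon\to 0$; correspondingly $\int_\epsilon^{1/2} u_1\,\Delta f$ oscillates as well. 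For this particular $f$ both $\sup(-\Delta f/f)$ and $\inf(-\Delta f/f)$ are infinite, so the theorem is trivially true, but the example shows that ``exhaust by smooth subdomains and pass to the limit'' is not a routine step for the class of $f$ allowed here. To make your approach rigorous one must actually \emph{use} the sign information coming from the assumed finiteness of $\inf(-\Delta f/f)$ (resp.\ $\sup$) to control the boundary term --- for instance, exhausting by level sets $\{u_1>\epsilon\}$, observing that $\partial_\nu u_1=-|\nabla u_1|$ there so that one of the two boundary pieces has a definite sign, and invoking the one-sided bound $-\Delta f-cf\ge 0$ to handle the other. That is more work than the phrase ``I do not expect any genuine obstacle'' suggests, and it is the reason the maximum-principle proof is the standard one: it sidesteps the question of whether $\nabla f$ and $\Delta f$ are even integrable near $\partial\Omega$. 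A secondary, smaller point: in the equality case you conclude that $f$ solves $-\Delta f=\lambda_1 f$ classically with $f>0$ and $f|_{\partial\Omega}=0$, but to call $f$ ``a first Dirichlet eigenfunction'' one should also check $f\in H_0^1(\Omega)$; this is again automatic once the integration by parts has been justified, so the two gaps are really one.
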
 We now present the proof of Theorem \ref{tone-theo}.
\begin{proof}

 \noindent Let $\varphi: M \hookrightarrow  N$ be an isometric
immersion with tamed second fundamental form of a complete $m$-manifold $M$ into a $n$-manifold $N$   with a pole $p\in N$ and sectional radial curvatures $B\leq K_{N}\leq 0$.
Let $x_{0}\in M$, $p=\varphi(x_{0})\in N$ and let $\rho_{N} (y)={\rm dist}_{N}(p,y)$ be the
distance function on $N$ and $\rho_{N} \circ \varphi$ the extrinsic distance on $M$. By the proof of Theorem (\ref{logan}) there is an $r_{0}>0$ such that there is no critical points $x\in
M\setminus \varphi^{-1}(B_{N}(r_{0}))$ for $\rho_{N} \circ \varphi$, where
$B_{N}(r_{0})$ is the geodesic ball in $N$ centered at $p$ with radius $r_{0}$. Let $r >r_{0}$ and let $D_r =\varphi^{-1}(B_{N}(r))$ be an extrinsic ball. Since $\varphi $
is proper we have that $D_t$ is precompact with boundary $\partial D_t$ that we may suppose to be smooth for any $t>r_0$ by using the regular set theorem. Let $v:B(r)\to \mathbb{R}$ be a positive first eigenfunction of the geodesic ball $B(r)$ of radius $r$ in the $l$-dimensional Euclidean space $\erre^l$, where $l$ is to be determined. The function $v$ is radial, i.e. $v(x)=v(\vert x\vert)$, and satisfies the following differential equation,
\begin{equation}v''(t)+(l-1)\,\frac{v'(t)}{t}\,
+\lambda_{1}(r)v(t)=0,
\,\, \forall\, t\in[0,r].\label{eqLambda-l}
\end{equation}
With initial data $v(0)=1$, $v'(0)=0$. Moreover, $v'(t)<0$ for all $t\in (0,r]$. Where $\lambda_{1}(r)$ is the first Dirichlet eigenvalue of the geodesic ball $B(r)\subset\erre^l$ with radius $r$.
Define
$\tilde{v}:B_{N}(r)\to \mathbb{R}$ by $\tilde{v}(y)=v\circ\rho_{N}(y)$
 and $f:D_r \to \mathbb{R}$ by $f(x)=\tilde{v}\circ \varphi(x)$. By  Barta's Theorem we have
 $\lambda_{1}(D_r)\leq \sup_{D_r}(-\triangle f/f)$. The Laplacian $\triangle f$ at a point $x\in M$ is given by
 \begin{eqnarray}
 \triangle_{M} f(x)& =&[\displaystyle\sum_{i=1}^{m}\hess\, \tilde{v}(e_i,e_i)
 + \langle \grad \tilde{v},\vec H\rangle](\varphi(x))\nonumber \\
 &=&\displaystyle\sum_{i=1}^{m}\left[{v}''(\rho_{N}
 )\langle \grad\rho_{N}, e_{i}\rangle^{2} +v'( \rho_{N}
 )\,\hess\,\rho_{N} (e_i,e_i)\right]+ v'( \rho
 )\langle \grad \rho_{N},\vec H\rangle\nonumber
 \end{eqnarray}
 Where $\hess\,\tilde{v}$ is the Hessian of $\tilde{v}$ in the metric
 of $N$ and $\{e_{i}\}_{i=1}^{m}$ is an orthonormal basis for $T_{x}M$ where
 we made the identification $d\varphi(e_{i})=e_{i}$. We are going to give an upper bound for $(-\triangle f/f)$ on $\varphi^{-1}(B_{N}(r))$.
Let $x\in \varphi^{-1}(B_{N}(r)) $ and choose an orthonormal basis $\{e_1,...,e_m\} $ for $T_{x}M$ such that $\{e_{2},\ldots, e_{m}\}$ are tangent to the distance sphere $\partial B_{N}(R(x))$ of radius $R(x)=\rho_{N} (\varphi (x))$ and $e_{1}=\frac{\nabla R}{\vert \nabla R\vert}$. To simplify the notation set $t=\rho_{N}(\varphi(x))$, $\triangle_{M}=\triangle$. Then
\begin{eqnarray}\label{eq40}
\triangle f(x)&=&\displaystyle\sum_{i=1}^{m}\left[{v}''(t)\langle \grad \rho_{N}, e_{i}\rangle^{2} +v'(t)\,\hess\,\rho_{N} (e_i,e_i)\right]+ v'( t)\langle \grad \rho_{N},\vec H\rangle\nonumber \\
&=&{v}''(t)\langle \grad \rho_{N}, \frac{\nabla R}{\vert \nabla R\vert}\rangle^{2}+v'(t)\,\hess\,\rho_{N} (\frac{\nabla R}{\vert \nabla R\vert},\frac{\nabla R}{\vert \nabla R\vert}) \\
&&+\displaystyle\sum_{i=2}^{m}v'(t)\,\hess\,\rho_{N} (e_i,e_i)+ v'( t)\langle \grad\rho_{N},\vec H\rangle\nonumber
\end{eqnarray}

Thus from (\ref{eq40})
\begin{eqnarray}\label{eq41}-\frac{\triangle f}{f}(x)&=&-\frac{v''}{v}(t)\langle \grad \rho_{N}, \frac{\nabla R}{\vert \nabla R\vert}\rangle^{2}-\frac{v'}{v}(t)\,\hess\,\rho_{N} (\frac{\nabla R}{\vert \nabla R\vert},\frac{\nabla R}{\vert \nabla R\vert}) \\
&&-\displaystyle\sum_{i=2}^{m}\frac{v'}{v}(t)\,\hess\,\rho_{N} (e_i,e_i)-\frac{ v}{v}'( t)\langle \grad\rho_{N},\vec H\rangle\nonumber
\end{eqnarray}
The equation (\ref{eqLambda-l})  says that \[ -\frac{v''}{v}(t)=(l-1)\frac{v'}{t\, v}+ \lambda_{1}(r)\]
By the Hessian Comparison Theorem and the fact $v'/v\leq 0$ we have  from equation (\ref{eq41}) the following inequality

\begin{eqnarray}\label{eq42}-\frac{\triangle f}{f}(x)&\leq &\lambda_{1}(r)\left(1-\vert \nabla^\perp R\vert^2\right) \nonumber\\
&& -\frac{v'}{tv}\left[\frac{t\,h'}{h}(m-\vert \nabla^\perp R\vert^2)-(l-1)\vert \nabla R\vert^2+t\vert \vec H\vert\right]\nonumber\\
&\leq &\lambda_{1}(r)\\
&& -\frac{v'}{tv}\left[\frac{t\,h'}{h}m-(l-1)\vert \nabla R\vert^2+t\vert \vec H\vert\right]\nonumber
\end{eqnarray}
where $h$ is the solution of the following problem 
\begin{eqnarray}\label{eqh}
\left \{
\begin{array}{l}
h'' +Bh=0 \\
h(0)=0, h'(0)=1\\
\end{array} \right.
\end{eqnarray}
Now, to bound $t\frac{h'}{h}$ we will make use of the following lemma
\begin{lemma}
Let $h\in C^\infty [0,\infty)$ be a positive function with $h(0)=0$ and $h'(0)=1$. Suppose 
$$
 \frac{h''}{h}(t)\leq \frac{2}{t^2},\quad \forall t>0.
$$
Then
$$
t\frac{h'}{h}(t)\leq 2.
$$
\end{lemma}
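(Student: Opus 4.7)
The plan is to prove the lemma by a Sturm-type Wronskian comparison of $h$ with the explicit extremal solution $H(t)=t^{2}$, which saturates the hypothesis in the sense that $H''/H = 2/t^{2}$ and for which $tH'/H \equiv 2$. This is the natural choice: the hypothesis can be rewritten as $h''/h \leq H''/H$, and the goal $th'/h \leq 2 = tH'/H$ is precisely a comparison of logarithmic derivatives, the standard output of a Sturm argument.

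Concretely, I would introduce the Wronskian
\[
W(t) := h'(t)\,H(t) - h(t)\,H'(t) = t^{2}h'(t) - 2t\,h(t),
\]
and differentiate to obtain
\[
W'(t) = h''(t)\,H(t) - h(t)\,H''(t) = h(t)\left(\frac{t^{2}h''(t)}{h(t)} - 2\right).
\]
Since $h$ is positive on $(0,\infty)$ and the hypothesis gives $t^{2}h''/h \leq 2$, this shows $W'(t)\leq 0$ for every $t>0$. Combined with the initial value $W(0) = h'(0)H(0) - h(0)H'(0) = 0$ (both $h$ and $H$ vanish at the origin), monotonicity yields $W(t)\leq 0$ on $[0,\infty)$. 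Dividing the inequality $t^{2}h'(t) \leq 2t\,h(t)$ by the strictly positive quantity $t\,h(t)$ then delivers exactly $th'(t)/h(t) \leq 2$, as required.

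I do not anticipate any real obstacle: the argument is a textbook Sturm comparison between two second-order linear ODEs whose $-q$ coefficients are ordered. The only mild delicacy is behaviour at $t=0$, where $H$ and $h$ both vanish so that $th'/h$ is \emph{a priori} an indeterminate $0/0$; however, the initial conditions $h(0)=0$, $h'(0)=1$ give the expansion $h(t)=t+O(t^{2})$, so $th'/h\to 1$ as $t\to 0^{+}$ and both $W$ and $W'$ extend continuously to the origin. The monotonicity argument therefore applies without modification on $[0,\infty)$, and the conclusion $th'(t)/h(t)\leq 2$ holds on all of $(0,\infty)$.
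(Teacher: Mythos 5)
Your proof is correct and is essentially the paper's argument: the paper also shows that the quantity $t^2h'(t)-2th(t)$ (your Wronskian $W$) is nonincreasing by computing the same derivative $h''t^2-2h\le 0$, and then deduces $th'/h\le 2$ by letting a lower limit $t_0\to 0$ rather than invoking $W(0)=0$ directly, but these are the same idea.
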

\begin{proof}
Observe that the function $h'(t)t^2-2h(t)t$ is a decreasing function on $t$ because
\begin{equation}
\frac{d}{dt}\left(h'(t)t^2-2h(t)t\right)=h''(t)t^2-2h(t)=h(t)t^2\left(\frac{h''(t)}{h(t)}-\frac{2}{t^2}\right)\leq 0.
\end{equation}
Hence for any $t>0$
\begin{equation}
h'(t)t^2-2h(t)t\leq h'(t_0)t_0^2-2h(t_0)t_0\leq h'(t_0)t_0^2,
\end{equation}
for any $t_0<t$. Then,
\begin{equation}
\frac{h'(t)t}{h(t)}\leq \frac{h'(t_0)t_0^2}{h(t)t}+2,
\end{equation}
Now letting $t_0$ tend to $0$ we obtain the desired upper bound.
\end{proof}By using the above lemma in inequality (\ref{eq42}),
\begin{eqnarray}\label{eqfinal}-\frac{\triangle f}{f}(x)&\leq &\lambda_{1}(r)-\frac{v'}{tv}\left[2m-(l-1)\vert \nabla R\vert^2+t\vert \vec H\vert\right]
\end{eqnarray}

Since the immersion is tamed we have that there exists $t_c$ such that for any $c\in (a(M),1)$
\begin{equation}
R(x)\Vert \alpha\Vert(x)\leq c,\quad\forall x\in M\setminus D_{t_c}.
\end{equation} 
We are going to split the prove in two cases

\subsection*{Case I} The point $x\in D_r$ belongs to $M\setminus D_{t_c}$

Since we are assuming that $x\in M\setminus D_{t_c}$, then by using inequality (\ref{eqfinal})

\begin{equation}
\begin{aligned}
-\frac{\triangle f}{f}(x) \leq &\lambda_{1}(r) -\frac{v'}{tv}\left[2m-(l-1)(1-\Lambda_c(t_c)^2)+c\right].
\end{aligned}
\end{equation}
Because $t\vert\vec H\vert\leq t\vert \alpha\vert\leq c$ and we have used the monotonocity of the $\Lambda_c$ function given in definition (\ref{deflam}), see also inequality (\ref{ine3.6}).
Since the above inequality is true for any $\mathbb{N}\ni l\geq 1$, we can choose $l$ large enough in such a way that 
$$2m-(l-1)(1-\Lambda_c(t_c)^2)+c\leq 0.$$
 Hence,
\begin{equation}
-\frac{\triangle f}{f}(x) \leq \lambda_{1}(r),
\end{equation} 
for any $x\in M\setminus D_{t_c}$.

\subsection*{Case II}The point $x\in D_t$ belongs to $D_{t_c}$.

Since $D_{t_c}$ is compact, let us set
\begin{equation}
H_0:=\max_{x\in D_{t_c}}R(x)\vert \vec H\vert.
\end{equation}
By using inequality (\ref{eqfinal}),
\begin{eqnarray}\label{eqfinal2}-\frac{\triangle f}{f}(x)&\leq &\lambda_{1}(r)-\frac{v'}{tv}\left[2m+H_0\right]
\end{eqnarray} 
 We need the following technical lemma.
 \begin{lemma}\label{lemma2}
 Let $v$ be the function satisfying (\ref{eqLambda-l}).   Then,  
$$
-v'(t)/t \leq \lambda_{1}(r)
$$ for all $t\in [0,r]$.
 \end{lemma}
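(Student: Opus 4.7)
The plan is to treat the ODE in (\ref{eqLambda-l}) in its divergence (Sturm--Liouville) form and integrate once, then use the monotonicity and normalization of $v$ to estimate the resulting integral.

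First I would rewrite (\ref{eqLambda-l}) by multiplying through by the integrating factor $t^{l-1}$. This turns the equation into
\begin{equation*}
\frac{d}{dt}\bigl(t^{l-1}v'(t)\bigr)=-\lambda_{1}(r)\,t^{l-1}v(t).
\end{equation*}
Integrating from $0$ to $t\in(0,r]$ and using $v'(0)=0$ (so the boundary term at $0$ vanishes because $l\geq 1$) yields
\begin{equation*}
t^{l-1}v'(t)=-\lambda_{1}(r)\int_{0}^{t}s^{l-1}v(s)\,ds,
\end{equation*}
which I rearrange as
\begin{equation*}
-\frac{v'(t)}{t}=\frac{\lambda_{1}(r)}{t^{l}}\int_{0}^{t}s^{l-1}v(s)\,ds.
\end{equation*}

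Next I would use the qualitative behaviour of $v$ recorded just before the lemma: $v$ is a positive radial first Dirichlet eigenfunction with $v(0)=1$ and $v'(t)<0$ on $(0,r]$. In particular $v$ is strictly decreasing, so $v(s)\leq v(0)=1$ for every $s\in[0,t]$. Inserting this trivial bound gives
\begin{equation*}
\int_{0}^{t}s^{l-1}v(s)\,ds\leq \int_{0}^{t}s^{l-1}\,ds=\frac{t^{l}}{l},
\end{equation*}
and therefore
\begin{equation*}
-\frac{v'(t)}{t}\leq \frac{\lambda_{1}(r)}{l}\leq \lambda_{1}(r),
\end{equation*}
which is the claimed inequality. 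The estimate extends to $t=0$ by continuity, since differentiating (\ref{eqLambda-l}) at $t=0$ gives $\lim_{t\to 0^{+}}(-v'(t)/t)=\lambda_{1}(r)/l$.

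I do not expect a real obstacle here: once one recognises that (\ref{eqLambda-l}) is exactly the radial Laplace eigenfunction equation on $\mathbb{R}^{l}$, the substitution $w(t):=t^{l-1}v'(t)$ linearises the computation and the pointwise bound $v\leq 1$ is the only analytic input needed. The only point to be careful about is the boundary term at $t=0$, which is handled by observing that $t^{l-1}v'(t)\to 0$ as $t\to 0^{+}$ (using $v'(0)=0$ and $l\geq 1$).
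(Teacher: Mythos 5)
Your proof is correct, and it takes a genuinely different route from the paper's. The paper introduces the auxiliary function $\gamma(t)=\lambda_{1}(r)\,t+v'(t)$, observes from the ODE that $v''(t)\geq-\lambda_{1}(r)$ (using $v'\leq0$ and $v\leq v(0)=1$), so $\gamma'\geq0$; since $\gamma(0)=0$ this gives $\gamma\geq0$, which is the claim after dividing by $t$. You instead put the equation in Sturm--Liouville form, integrate once to obtain the exact identity
\begin{equation*}
-\frac{v'(t)}{t}=\frac{\lambda_{1}(r)}{t^{l}}\int_{0}^{t}s^{l-1}v(s)\,ds,
\end{equation*}
and then bound $v\leq1$. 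Both arguments rest on the same elementary facts ($v(0)=1$, $v'(0)=0$, $v'\leq0$ hence $v\leq1$), but yours yields the sharper estimate $-v'(t)/t\leq\lambda_{1}(r)/l$, which strictly improves the stated bound for $l>1$; this is harmless here since only the weaker inequality is used downstream, but it shows the integral form carries more information. Your handling of the boundary term at $t=0$ (vanishing because $v'(0)=0$ when $l=1$ and because $t^{l-1}\to0$ when $l>1$) and the $t\to0^{+}$ limit are both correct.
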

\begin{proof}Consider the function $\gamma:[0,r]\to \mathbb{R}$ given by
$\gamma(t)=\lambda_1(r) \cdot t + v'(t)$. We know that $v(0)=1$, $v'(0)=0$ and $v'(t)\leq 0$ besides $v$ satisfies equation (\ref{eqLambda-l}). Observe that \[0=v''(t)+(l-1)v'+\lambda_1(r) v\leq v'' +\lambda_1(r).\] Thus $v''\geq -\lambda_1(r)$ and $\gamma'(t)=\lambda_1(r) +v''\geq 0$. Since $\gamma(0)=0$ we have $\gamma(t)=\lambda_1(r) t+v'(t)\geq 0$. This proves the lemma.\end{proof}

 Since $v$ is a non-increasing positive function we have $ v(t) \geq
v(t_c)$. Applying the inequality (\ref{eqfinal2})  we obtain
\begin{eqnarray}\label{eqfinal3}-\frac{\triangle f}{f}(x)&\leq &\lambda_{1}(r)\left[1+\frac{1}{v(t_c)}\left(2m+H_0\right)\right].
\end{eqnarray} 

Thus, finally from case I and Case II, we know that  for all $x\in \varphi^{-1}(B_{N}(r))$
\begin{eqnarray}
-(\triangle f/f)(x)& \leq&
 \max \left\{ 1, 1+\frac{1}{v(t_c)}\left(2m+H_0\right)\right\}\cdot\lambda_{1}(r)\nonumber \\ &=&
\left[1+\frac{1}{v(t_c)}\left(2m+H_0\right)\right]\cdot\lambda_{1}(r)\nonumber
\end{eqnarray}
Then by Barta's Theorem \[\lambda_{1}(D_r)\leq \left[1+\frac{1}{v(t_c)}\left(2m+H_0\right)\right]\cdot\lambda_{1}(r)\] Observe that $\left[1+\frac{1}{v(t_c)}\left(2m+H_0\right)\right]$ does not depend on $r$. So letting $r\to \infty$ we have 
$$
\lambda^{\ast}(M)\leq \left[1+\frac{1}{v(t_c)}\left(2m+H_0\right)\right]\cdot\lambda^{\ast}(\erre^{l})=0.
$$
And this finishes the proof of the theorem.\end{proof}

\def\cprime{$'$} \def\cprime{$'$} \def\cprime{$'$} \def\cprime{$'$}
  \def\cprime{$'$}

\end{document}